\documentclass[a4paper,11pt]{article}

\usepackage{tikz}
\usepackage{pgfplots}
\usepackage{amsfonts,amsmath,amsthm}
\usepackage{array}
\usepackage{float}
\usepackage{subfig}
\usepackage{multirow}
\usepackage{graphicx}
\usepackage{tabularx}
\usepackage{bigstrut}
\usepackage{a4wide}

\usepackage{bbm}

\def \E {\mathbb{E}}
\def \P {\mathbb{P}}
\def \Var {{\rm Var}\,}

\def \ee {{\rm e}}
\def \eps {\varepsilon}

\def \d {\delta}

\def \dd {{\rm d}}

\newcommand{\e} {\varepsilon}

\newcommand{\Rightarrowd}{{\;\buildrel{d}\over\Rightarrow\;}}
\newcommand{\equalD}{{\;\buildrel{d}\over= \;}}

\newtheorem{theorem}{Theorem}

\newtheorem{lemma}{Lemma}

\newtheorem{assumption}{Assumption}
\newtheorem{corollary}{Corollary}
\newtheorem{proposition}{Proposition}

\newtheorem{remark}{Remark}

\begin{document}

\title{Robust heavy-traffic approximations for service systems facing overdispersed demand}

\author{Britt W.J. Mathijsen\footnote{
Department of Mathematics and Computer Science, Eindhoven
University of Technology, P.O. Box 513, 5600 MB Eindhoven, The
Netherlands (\{b.w.j.mathijsen,a.j.e.m.janssen,j.s.h.v.leeuwaarden\}@tue.nl)} \and 
A.J.E.M. Janssen\footnotemark[1] \and 
Johan S.H. van Leeuwaarden\footnotemark[1] \and 
Bert Zwart\footnotemark[1]\ \footnote{Centrum Wiskunde \& Informatica, P.O. Box 94079,
1090 GB, Amsterdam, The Netherlands (bert.zwart@cwi.nl)}
}

\maketitle

\begin{abstract}
Arrival processes to service systems often display fluctuations that are larger than anticipated under the Poisson assumption, a phenomenon that is referred to as \textit{overdispersion}.
Motivated by this, we analyze a class of discrete stochastic models for which we derive heavy-traffic approximations that are scalable in the system size.
Subsequently, we show how this leads to novel capacity sizing rules that acknowledge the presence of overdispersion. This, in turn, leads to robust approximations for performance characteristics of systems that are of moderate size and/or may not operate in heavy traffic.
\end{abstract}

\section{Introduction}\label{intro}

One of the most prevalent assumptions in queueing theory is the assumption that arrivals of jobs occur according to a Poisson process.
Although natural and convenient from a mathematical viewpoint, the Poisson assumption often fails to be confirmed in practice. A deterministic arrival rate implies that the demand over any given period is a Poisson random variable, whose variance equals its expectation. A growing number of empirical studies shows that the variance of demand typically deviates from the mean significantly. Recent work \cite{Kim2015b,Kim2015a} reports variance being strictly less than the mean in health care settings employing appointment booking systems. This reduction of variability can be accredited to the goal of the booking system to create a more predictable arrival pattern. On the other hand, in other scenarios with no control over the arrivals, the variance can dominate the mean, see \cite{Avramidis:2004,Bassamboo2010,Bassamboo2009,Brown2005,Chen2001,Gans2003,Gurvich2010,koolejongbloed,kimwhitt,maman,Mehrotra2010,Robbins2010,Steckley2009,Zan2012}. The feature that variability is higher than one expects from the Poisson assumption is referred to as {\it overdispersion} and serves as the primary motivation for this work.

Stochastic models with the Poisson assumption have been widely applied to optimize capacity levels in service systems. 
%The goal is to minimize operating costs while providing sufficiently high Quality-of-Service in terms of performance measures such as mean delay or excess delay. 
When stochastic models, however, do not take into account overdispersion, resulting performance estimates are likely to be overoptimistic. 
The system then ends up being underprovisioned, which possibly causes severe performance problems, particularly in critical loading.
%, it is well known that stochastic fluctuations can have a detrimental effect on system performance.

A significant part of the queueing literature has focused on extending Poisson arrival processes to more bursty arrival processes, and analyze these models using, for example, matrix-analytic models
\cite{Neuts1981,Latouche1999}. In this paper, we focus on a different cause of overdispersion in arrival processes, which is \emph{arrival rate uncertainty}.
Since model primitives, in particular the arrival rate, are typically estimated through historical data, these are prone to be subject to forecasting errors.
In the realm of Poisson processes, this inherent uncertainty can be acknowledged by viewing the arrival rate $\Lambda_n$ itself as being stochastic. The resulting doubly stochastic Poisson process, also known as Cox process (first presented in \cite{Cox1955}), implies that demand in a given interval $A_{k,n}$ follows a mixed Poisson distribution.
In this case, the expected demand per period equals $\mu_n = \E[\Lambda_n]$, while the variance is $\sigma_n^2 = \E[\Lambda_n]+\Var\Lambda_n$.
By selecting the distribution of the mixing factor $\Lambda_n$, the magnitude of overdispersion can be made arbitrarily large, and only a deterministic $\Lambda_n$ leads to a true Poisson process.

The mixed Poisson model presents a useful way to fit both the mean and variance to real data, particularly in case of overdispersion.
The mixing distribution can be estimated parametrically or non-parametrically, see \cite{koolejongbloed,maman}.
A popular parametric family is the Gamma distribution, which gives rise to an effective data fitting procedure that uses the fact that a Gamma mixed Poisson random variable follows a negative binomial distribution.
We will in this paper adopt the assumption of a Gamma-Poisson mixture as the demand process.\\

We investigate the impact of this modeling assumption within the context of a classical model in queueing theory, which is the reflected random walk. 
In particular, we consider a sequence of such random walks, indexed by $n$, with increments $A_{k,n}-s_n$, where $A_{k,n}\sim\,{\rm Pois}(\Lambda_n)$ and $s_n$ is denotes the system capacity, and we consider a regime in which the system approaches heavy traffic.
We are especially interested in the impact of overdispersion on the way performance measures scale, and how they impact capacity allocation rules. 
 
A sensible candidate capacity allocation rule is $s_n = \mu_n + \beta \sigma_n$ for some $\beta>0$, which is equivalent to the scaling
\begin{equation*}
\frac{\mu_n}{\sigma_n}\,(1-\rho_n) \to \beta, \text{ for } \qquad n\to\infty,
\end{equation*}
where $\rho_n := \mu_n/s_n$ denotes the utilization.
We will verify mathematically that this is asymptotically the appropriate choice and our methods allow to quantify the accuracy of the resulting performance formulae for finite systems. 
Studies that have adressed similar capacity allocation problems with stochastic arrival rates include \cite{Kocaga2015,maman,Whitt1999,Whitt2006}.
Of the aforementioned papers, our work best relates to \cite{maman}, in the sense that we also assess the asymptotic performance of queueing system having a stochastic arrival rate in heavy traffic.
We therefore expand the paradigm of the QED regime, in order to have it accommodate for overdispersed demand that follows from a doubly stochastic Poisson process.

The first part of our analysis relates to \cite{Sigman2011b}, in which a sequence of cyclically thinned queues, denoted by $G_n/G_n/1$ queues, is considered.
Here, $G_n$ indicates that only every $n^{th}$ point of the original point process is considered. 
In this framework, it is shown that the stationary waiting time can be characterized as the maximum of a random walk, in which the increments grow indefinitely. 
Under appropriate heavy-traffic scaling, the authors prove convergence to a Gaussian random walk, and moreover characterize the limits the stationary waiting time moments.
Our work differs with respect to \cite{Sigman2011b} in the sense that we study a discrete-time model, rather than the continuous-time $G_n/G_n/1$ queue. 
Also, the presence of the overdispersion requires us to employ an alternative scaling. 

Furthermore, our approach through Pollaczek's formula, allows us to derive estimates for performance measures in pre-limit, i.e.~large but finite-size, systems.
Mathematically, this second part of our analysis is related to previous work \cite{Janssen2015}. In particular, we use a refinement of the saddle point technique to establish our asymptotic estimates. The associated analysis is substantially more involved in the present situation, as we will explain in Section \ref{sec:robust_analysis}.\\

\noindent
\textbf{Structure of the paper}. The remainder of this paper is structured as follows. Our model is introduced in Section \ref{modelSection} together with some preliminary results.
In Section 3 we derive the classical heavy-traffic scaling limits for the queue length process in the presence of overdispersed arrivals both for the moments and the distribution itself.
Section 4 presents our main theoretic result, which provides a robust refinement to the heavy-traffic characterization of the queue length measures in pre-limit systems.
In Section 5, we describe the numerical results and demonstrate the heavy-traffic approximation.

\section{Model description and preliminaries}\label{modelSection}
We consider a sequence of discrete stochastic models, indexed by $n$, in which time is divided into periods of equal length. At the beginning of each period $k=1,2,3,...$ new demand $A_{k,n}$ arrives to the system. The demands per period $A_{1,n},A_{2,n},...$ are assumed independent and equal in distribution to some non-negative integer-valued random variable $A_n$.
The system has a service capacity $s_n\in\mathbb{N}$ per period, so that the recursion
\begin{equation}
   \label{mm1}
   Q_{k+1,n} = \max\{Q_{k,n} + A_{k,n}-s_n,0\}, \qquad k=0,1,2,... ,
   \end{equation}
with $Q_{n}(0) = 0$. For brevity, we define $\mu_n:= \E A_{n}$ and $\sigma_n^2 = \Var A_n$. The duality principle shows that this expression is equivalent to
\begin{equation}
   \label{mm2}
   Q_{k+1,n} \equalD \max_{0\leq j\leq k}\Bigl\{{\sum_{i=1}^j} (A_{i,n}-s_n)\Bigr\}, \qquad k=0,1,2,... ,
   \end{equation}
   i.e. the maximum of the first $k$ steps of a random walk with steps distributed as $A_n-s_n$. Even more so, we can characterize $Q_{n}$, the stationary queue length, as
\begin{equation}
   \label{mm3}
   Q_{n} \equalD \max_{k\geq 0}\Bigl\{{\sum_{i=1}^k} (A_{i,n}-s_n)\Bigr\}.
   \end{equation}
The behavior of $Q_{k,n}$ greatly depends on the characteristics of $A_n$ and $s_n$. First, note that $\mu_n<s_n$ is a necessary condition for the maximum to be finite and therefore for the queue to be stable. Before continuing the analysis of $Q_{n}$, we impose a set of conditions on the asymptotic properties of $s_n,\mu_n$ and $\sigma_n$.

\begin{assumption}
\label{as1}
\ \\*
\vspace{-6mm}
\begin{enumerate}
\item[{\normalfont (a)}] {\rm (Asymptotic growth)}
\begin{equation*}
\mu_n,\sigma_n \to \infty, \quad \text{\rm for } n\to\infty.
\end{equation*}
\item[{\normalfont (b)}] {\rm (Persistence of overdispersion)}
\begin{equation*}
\sigma_n^2/\mu_n \to \infty \quad \text{\rm for } n\to\infty.
\end{equation*}
\item[{\normalfont (c)}] {\rm (Heavy-traffic condition)}
The utilization $\rho_n := \mu_n/s_n \to 1$ as $n\to\infty$, while
\begin{equation}\label{mm5}
s_n = \mu_n + \beta\, \sigma_n,
\end{equation}
for some $\beta > 0$. This is equivalent to requiring
\begin{equation}\label{mm4}
(1-\rho_n)\frac{\mu_n}{\sigma_n} \to \beta, \qquad \text{as }n\to\infty.
\end{equation}
\end{enumerate}
\end{assumption}

Assumption \ref{as1} is assumed to hold throughout the remainder of this paper.

Since we are mainly interested in the system behavior in heavy traffic, it is appropriate to study the queue length process in a scaled form. Substituting $s_n$ as in Assumption \ref{as1}(c), and dividing both sides of \eqref{mm3} by $\sigma_n$, gives

\begin{equation}
\label{mm6}
\frac{Q_{n}}{\sigma_n} = \max_{k\geq 0} \Bigl\{{\sum_{i=1}^k} \Bigl(\frac{A_{i,n}-\mu_n}{\sigma_n} - \beta\Bigr)\Bigr\}.
\end{equation}

By defining $\hat{Q}_n := Q_n/\sigma_n$ and $\hat{A}_{i,n} := (A_{i,n}-\mu_n)/\sigma_n$, we see that the scaled queue length process is in distribution equal to the maximum of a random walk with i.i.d. increments distributed as $\hat{A}_n-\beta$. Besides $\E\hat{A}_n = 0$ and $\Var \hat{A}_n=1$, the scaled and centered arrival counts $\hat{A}_n$ has a few other nice properties which we turn to later in this section.

The model in \eqref{mm1} is valid for any distribution of $A_n$, also for the original case where the number of arrivals follows a Poisson distribution with fixed parameter $\lambda_n$, but Assumption \ref{as1}(b) does not hold then. Instead, we assume $A_n$ to be Poisson distributed with uncertain arrival rate rendered by the  non-negative random variable $\Lambda_n$. This $\Lambda_n$ is commonly referred to as the \emph{prior} distribution, while $A_n$ is given the name of a Poisson mixture, see \cite{Grandell1997}. Given that the moment generation function of $\Lambda_n$, denoted by $M^\Lambda_n(\cdot)$, exists, we are able to express the probability generating function (pgf) of $A_n$ through the former. Namely,
\begin{equation}
\label{mm7}
\tilde{A}_n(z) = \E [\E[ z^{A_n} | \Lambda_n ] ] = \E[ \exp(\Lambda_n(z-1))] = M^\Lambda_n(z-1).
\end{equation}
From \eqref{mm7}, we get
\begin{equation}
\label{mm8}
\mu_n = \E A_n =  \E\Lambda_n,\qquad
\sigma_n^2 = \Var A_n = \Var \Lambda_n + \E\Lambda_n,
\end{equation}
so that $\mu_n<\sigma_n^2$ if $\Lambda_n$ is non-deterministic. Assumption \ref{as1}(b) hence translates to
\[\Var \Lambda_n/\E \Lambda_n\rightarrow \infty, \qquad n\rightarrow\infty.\]
The next result relates the converging behavior of the centered and scaled $\Lambda_n$ to that of $\hat{A}_n$.

\begin{lemma}\label{gaussStep}
Let $\mu_n,\sigma_n^2\rightarrow\infty$ and $\sigma_n^2/\mu_n\rightarrow\infty$. If
\begin{equation}
\hat{\Lambda}_n := \frac{\Lambda_n-\mu_n}{\sigma_n}\Rightarrowd N(0,1), \qquad \text{\normalfont for } n\rightarrow\infty,
\end{equation}
where $N(0,1)$ denotes a standard normal variable, then $\hat{A}_n$ converges weakly to a standard normal variable as $n\rightarrow\infty$.
\end{lemma}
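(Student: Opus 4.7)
The plan is to reduce the lemma to an application of Slutsky's theorem by showing that the Poissonian fluctuations around $\Lambda_n$ are negligible on the $\sigma_n$ scale. Concretely, I would write
\begin{equation*}
\hat{A}_n = \frac{A_n-\mu_n}{\sigma_n} = \frac{A_n-\Lambda_n}{\sigma_n} + \frac{\Lambda_n-\mu_n}{\sigma_n} =: R_n + \hat{\Lambda}_n,
\end{equation*}
so that the assumed CLT for $\hat{\Lambda}_n$ takes care of the second summand and only the remainder $R_n$ has to be controlled.

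For the remainder, I would argue $L^2$-convergence to zero. Since conditionally on $\Lambda_n$ the count $A_n$ is Poisson with mean $\Lambda_n$, one has $\E[A_n\mid\Lambda_n]=\Lambda_n$ and $\Var(A_n\mid\Lambda_n)=\Lambda_n$, hence
\begin{equation*}
\E[R_n^2] = \frac{1}{\sigma_n^2}\,\E\bigl[\E[(A_n-\Lambda_n)^2\mid\Lambda_n]\bigr] = \frac{\E\Lambda_n}{\sigma_n^2} = \frac{\mu_n}{\sigma_n^2}.
\end{equation*}
By Assumption~\ref{as1}(b) the right-hand side tends to zero, so $R_n\to 0$ in $L^2$ and a fortiori in probability.

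The conclusion then follows by Slutsky's theorem: $\hat{\Lambda}_n\Rightarrowd N(0,1)$ together with $R_n\to 0$ in probability gives $\hat{A}_n=\hat{\Lambda}_n+R_n\Rightarrowd N(0,1)$.

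There is no real obstacle here; the only subtlety is noticing that the natural decomposition uses the random centering $\Lambda_n$ (not the deterministic $\mu_n$) for $A_n$, because this is what makes the conditional Poisson variance identity applicable and produces exactly the ratio $\mu_n/\sigma_n^2$ that Assumption~\ref{as1}(b) forces to vanish. If one preferred a characteristic-function argument instead, one could compute $\E[e^{it\hat{A}_n}]=e^{-it\mu_n/\sigma_n}\E[\exp(\Lambda_n(e^{it/\sigma_n}-1))]$ and Taylor-expand $e^{it/\sigma_n}-1$; the $\Lambda_n\cdot(it/\sigma_n)$ term combines with the deterministic prefactor to reproduce the characteristic function of $\hat{\Lambda}_n$, while the next-order term contributes $-t^2\Lambda_n/(2\sigma_n^2)$, whose expectation $-t^2\mu_n/(2\sigma_n^2)$ again vanishes by Assumption~\ref{as1}(b). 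The $L^2$ route is shorter and avoids dominated-convergence bookkeeping inside the expectation, so I would favour it.
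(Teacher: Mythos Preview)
Your proof is correct, but it takes a genuinely different route from the paper. The paper argues directly with characteristic functions: it writes
\[
\varphi_{\hat A_n}(t)=e^{-i\mu_n t/\sigma_n}\,\varphi_{\Lambda_n}\bigl(-i(e^{it/\sigma_n}-1)\bigr),
\]
Taylor-expands the inner argument around $t/\sigma_n$, and bounds the discrepancy $\bigl|\varphi_{\hat A_n}(t)-\varphi_{\hat\Lambda_n}(t)\bigr|$ by a term of order $\mu_n t^2/\sigma_n^2$ via the elementary estimate $|\varphi'_{\Lambda_n}(u)|\le\mu_n$; the assumption $\mu_n/\sigma_n^2\to 0$ then forces this to vanish and L\'evy's continuity theorem finishes the job. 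Your decomposition $\hat A_n=R_n+\hat\Lambda_n$ together with the one-line conditional-variance computation $\E[R_n^2]=\mu_n/\sigma_n^2$ and Slutsky is cleaner and shorter: it isolates exactly the same quantity $\mu_n/\sigma_n^2$ without any analytic bookkeeping, and it makes transparent \emph{why} the overdispersion condition is the right hypothesis (it is precisely what kills the Poisson fluctuations on the $\sigma_n$ scale). The paper's approach, on the other hand, yields an explicit pointwise bound on the characteristic-function difference, which could in principle be upgraded to a rate statement. Amusingly, the alternative you sketch in your last paragraph is essentially the paper's actual proof.
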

The proof can be found in Appendix \ref{formalSec}.

The prevalent choice for  $\Lambda_n$ is the Gamma distribution. The Gamma-Poisson mixture turns out to provide a very good fit to arrival counts observed in service systems, as was observed by \cite{koolejongbloed}. Assuming $\Lambda_n$ to be of Gamma type with scale and rate parameters $a_n$ and $1/b_n$, respectively, we get
\begin{equation}
\label{r0}
\tilde{A}_n(z) = \Bigl(\frac{1}{1+b_n(1-z)}\Bigr)^{a_n},
\end{equation}
in which we recognize the pgf of a negative binomial distribution with parameters $a_n$ and $1/(b_n+1)$, so that
\begin{equation}
 \label{t21}
 \mu_n = a_nb_n,\qquad \sigma_n^2 = a_nb_n(b_n+1).
 \end{equation}

Note that in the context of a Gamma prior, the restrictions in Assumption \ref{as1} reduce to only two rules. For completeness, we include the revised list below.

\begin{assumption}\label{as2}
\ \\*
\vspace{-6mm}
\begin{enumerate}
\item[{\normalfont (a)}] {\rm (Asymptotic regime and persistence of overdispersion)}
\begin{equation*}
a_n, b_n \to \infty, \quad \text{\rm for } n\to\infty.
\end{equation*}
\item[{\normalfont (b)}] {\rm (Heavy-traffic condition)}
Let
\begin{equation*}
s_n = a_n b_n + \beta \sqrt{a_n b_n(b_n+1)},
\end{equation*}
for some $\beta>0$, or equivalently
\begin{equation*}
(1-\rho_n)\sqrt{a_n} \to \beta, \quad \text{\rm for } n\to\infty.
\end{equation*}
\end{enumerate}
\end{assumption}

The next result follows from the fact that $\Lambda_n$ is a Gamma random variable:
\begin{corollary}\label{scaledLambdaLemma}
Let $\Lambda_n\sim\text{\normalfont Gamma}(a_n,1/b_n)$, $A_n\sim{\rm Poisson }(\Lambda_n)$ and $a_n,b_n\rightarrow \infty$. Then $\hat{A}_n$ converges weakly to a standard normal random variable as $n\rightarrow \infty$.
\end{corollary}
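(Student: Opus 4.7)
}
The plan is to reduce the statement to Lemma \ref{gaussStep} by verifying that, for $\Lambda_n\sim\mathrm{Gamma}(a_n,1/b_n)$, the centered and rescaled prior $\hat\Lambda_n=(\Lambda_n-\mu_n)/\sigma_n$ converges weakly to a standard normal as $a_n,b_n\to\infty$. Once that is established, Lemma \ref{gaussStep} (applied with $\mu_n=a_nb_n$, $\sigma_n^2=a_nb_n(b_n+1)$, so that $\sigma_n^2/\mu_n=b_n+1\to\infty$) immediately delivers the desired conclusion for $\hat A_n$.

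The first step is to use the scaling property of the Gamma family: $\Lambda_n\equalD b_n\,G_n$, where $G_n\sim\mathrm{Gamma}(a_n,1)$. Hence
\begin{equation*}
\hat\Lambda_n = \frac{b_n G_n - a_n b_n}{\sqrt{a_nb_n(b_n+1)}} = \frac{G_n-a_n}{\sqrt{a_n}}\cdot\sqrt{\frac{b_n}{b_n+1}}.
\end{equation*}
Since $b_n/(b_n+1)\to 1$, by Slutsky's theorem it suffices to show $(G_n-a_n)/\sqrt{a_n}\Rightarrowd N(0,1)$ as $a_n\to\infty$.

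For this I would compute the moment generating function of $(G_n-a_n)/\sqrt{a_n}$. Using the standard formula $\E[\mathrm{e}^{tG_n}]=(1-t)^{-a_n}$ for $t<1$, for any fixed $t\in\mathbb{R}$ and $n$ large enough that $t/\sqrt{a_n}<1$,
\begin{equation*}
\log \E\bigl[\mathrm{e}^{t(G_n-a_n)/\sqrt{a_n}}\bigr] = -t\sqrt{a_n} - a_n \log\bigl(1 - t/\sqrt{a_n}\bigr).
\end{equation*}
Expanding $\log(1-x)=-x-x^2/2+O(x^3)$ with $x=t/\sqrt{a_n}$ yields
\begin{equation*}
-t\sqrt{a_n} + t\sqrt{a_n} + \tfrac{t^2}{2} + O\bigl(a_n^{-1/2}\bigr) \longrightarrow \tfrac{t^2}{2},
\end{equation*}
which is the log-MGF of $N(0,1)$. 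Convergence of MGFs in a neighborhood of $0$ implies weak convergence, so $(G_n-a_n)/\sqrt{a_n}\Rightarrowd N(0,1)$.

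Combining the two steps, $\hat\Lambda_n\Rightarrowd N(0,1)$, and Lemma \ref{gaussStep} then gives $\hat A_n\Rightarrowd N(0,1)$. No genuinely hard step is expected: the only subtlety is that $a_n$ need not be an integer, which rules out a naive application of the classical CLT to a sum of exponentials; the MGF argument above sidesteps this transparently, and this is where one has to be slightly careful about the domain $t<\sqrt{a_n}$. Everything else is just Slutsky and the easily verified fact $\sigma_n^2/\mu_n=b_n+1\to\infty$ required to invoke Lemma \ref{gaussStep}.
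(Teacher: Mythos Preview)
Your proof is correct and follows essentially the same approach as the paper: both reduce to Lemma~\ref{gaussStep} and verify $\hat\Lambda_n\Rightarrowd N(0,1)$ via a Taylor expansion of the log of a generating function. The only cosmetic differences are that the paper works directly with the characteristic function of $\hat\Lambda_n$ (invoking L\'evy's continuity theorem), whereas you first factor out $\sqrt{b_n/(b_n+1)}$ via Slutsky and then use the moment generating function of the unit-scale Gamma; the underlying computation is the same.
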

\begin{proof}
With Lemma \ref{gaussStep} in mind, it is sufficient to prove that $\hat{\Lambda}_n\Rightarrow N(0,1)$ for this particular choice of $\Lambda_n$.  We  do this by proving the pointwise convergence of the cf of $\hat{\Lambda}_n$ to $\exp({-} t^2/2)$, the cf of the standard normal distribution. Let $\varphi_{G}(\cdot)$ denote the characteristic function of a random variable $G$. By basic properties of the cf,
\begin{align}
\varphi_{\hat{\Lambda}_n}(t) &= \ee^{-i\mu_nt/\sigma_n}\,\varphi_{\Lambda_n}(t/\sigma_n)
= \ee^{-i\mu_nt/\sigma_n} \Bigl(1-\frac{i b_nt}{\sigma_n}\Bigr)^{-a_n}\nonumber\\
&= \exp\Bigl[ -\frac{i\mu_nt}{\sigma_n}\, - a_n\,{\rm ln}\Bigl(1-\frac{i b_nt}{\sigma_n}\Bigr)\Bigr]\nonumber\\
\label{g13d}
&= \exp\Bigl[ -\frac{i\mu_nt}{\sigma_n} -a_n\Bigl( {-}\frac{i\,b_nt}{\sigma_n} + \frac{b_n^2t^2}{2\sigma_n^2} + O( b_n^3/\sigma_n^3)\Bigr)\Bigr]  \nonumber\\
&= \exp\Bigl[ -\frac{b_n\,t^2}{2(b_n+1)} + O\left(1/\sqrt{a_n}\right)\Bigr] \rightarrow \exp\left({-} t^2/2\right),
\end{align}
for $n\rightarrow\infty$. By L\'evy's continuity theorem this implies $\hat{\Lambda}_n$ is indeed asymptotically standard normal. 
\end{proof}
 The characterization of the arrival process as a Gamma-Poisson mixture is of vital importance in later sections.

\subsection{Expressions for the stationary distribution\label{expressionsSubsec}}
Our main focus is on the stationary queue length distribution, denoted by
\[\mathbb{P}(Q_{n}=i) =\lim_{k\rightarrow\infty} \mathbb{P}(Q_{k,n}=i).\]
Denote the pgf of $Q_{n}$ by
\begin{equation}
\label{t1}
\tilde{Q}_n(w) = \sum_{i=0}^\infty \P(Q_{n}=i) w^i.
\end{equation}
To continue our analysis of $Q_{n}$, we need one more condition on $A_n$.
\begin{assumption}\label{as3}
The pgf of $A_n$, denoted by $\tilde{A}_n(w)$, exists within $|z|<r_0$, for some $r_0>1$, so that all moments of $A_n$ are finite.
\end{assumption}

We next recall two characterizations of $\tilde{Q}_n(w)$ that play prominent roles in the remainder of our analysis.
The first characterization of $\tilde{Q}_n(w)$ originates from a random walk perspective. As we see from \eqref{mm3}, the (scaled) stationary queue length is equal in distribution to the all-time maximum of a random walk with i.i.d. increments distributed as $A_n-\beta$ (or $\hat{A}_n-\beta$ in the scaled setting). Spitzer's identity, see e.g. \cite[Theorem VIII4.2]{Asmussen2003}, then gives
\begin{equation}
\label{t3}
\tilde{Q}_n(w) = \exp\Bigl\{\sum_{k=1}^\infty \frac{1}{k}\,\Big(\E\big[w^{\left(\sum_{i=1}^k \{A_{i,n}-s_n\}\right)^+}\big]-1\Big)\Bigr\},
\end{equation}
where $(x)^+ = \max\{x,0\}$. Hence,
\begin{equation}
\label{t4}
\E Q_{n} = \tilde{Q}_n'(1) = \sum_{k=1}^\infty \frac{1}{k}\E\Bigl[ {\sum_{i=1}^k} (A_{i,n} - s_n) \Bigr]^+,
\end{equation}
\begin{equation}
\label{t4a}
\Var Q_{n} = \tilde{Q}_n''(1)+Q_n'(1)-\left(\tilde{Q}_n'(1)\right)^2 = \sum_{k=1}^\infty \frac{1}{k}\E\Bigl[ \Big({\sum_{i=1}^k} (A_{i,n} - s_n) \Big)^+\Bigr]^2,
\end{equation}
\begin{align}
\label{t5}
\P(Q_{n}=0) = \tilde{Q}_n(0) &= \exp\Bigl\{{-}{\sum_{k=1}^\infty}\frac{1}{k} \P\Bigl(\sum_{i=1}^k (A_{i,n}-s_n) > 0\Bigr) \Bigr\}.
\end{align}
A second characterization follows from Pollaczek's formula, see \cite{Abate1993,Janssen2015}:
\begin{equation}
\label{t6}
\tilde{Q}_n(w) = \exp\Bigl\{ \frac{1}{2\pi i}\int_{|z|=1+\eps} {\rm ln}\Bigl(\frac{w-z}{1-z}\Bigr) \,\frac{(z^{s_n}-\tilde{A}_n(z))'}{z^{s_n}-\tilde{A}_n(z)}\, dz\Bigr\},
\end{equation}
which is analytic for $|w|<r_0$, for some $r_0>1$. Therefore, $\eps>0$ has to be chosen such that $|w|<1+\eps<r_0$. This gives
\begin{equation}
\label{t7}
\E Q_{n} = \frac{1}{2\pi i} \int_{|z|=1+\eps} \frac{1}{1-z}\,\frac{(z^{s_n}-\tilde{A}_n(z))'}{z^{s_n}-\tilde{A}_n(z)}\, \dd z,
\end{equation}
\begin{equation}
\label{t7a}
\Var Q_{n} = \frac{1}{2\pi i} \int_{|z|=1+\eps} \frac{{-}z}{(1-z)^2}\,\frac{(z^{s_n}-\tilde{A}_n(z))'}{z^{s_n}-\tilde{A}_n(z)}\, \dd z,
\end{equation}
\begin{equation}
\label{t8}
\P(Q_{n}=0) = \exp\Bigl\{ \frac{1}{2\pi i}\int_{|z|=1+\eps} {\rm ln}\Bigl(\frac{z}{z-1}\Bigr) \,\frac{(z^{s_n}-\tilde{A}_n(z))'}{z^{s_n}-\tilde{A}_n(z)}\, \dd z\Bigr\}.
\end{equation}

Pollaczek-type integrals like \eqref{t6}-\eqref{t8} first occurred in the work of Pollaczek on the classical single-server queue (see \cite{Abate1993,Cohen1982,Janssen2008} for historical accounts). These integrals are fairly straightforward to evaluate numerically and hence give rise to efficient algorithms for performance evaluation \cite{Abate1993,boon2017pollaczek}. The integrals also proved useful in establishing heavy-traffic results by asymptotic evaluation of the integrals in various heavy-traffic regimes \cite{Kingman1962,Cohen1982,Janssen2015,boon2017pollaczek2}, and in this paper we follow that approach for a heavy-traffic regime that is suitable for overdispersion.

\section{Heavy-traffic limits}

In this section we present the result on the convergence of the discrete process $\hat{Q}_{n}$ to a non-degenerate limiting process and of the associated stationary moments. The latter requires an interchange of limits. Using this asymptotic result, we derive two sets of approximations for $\E Q_n$, $\Var Q_n$ and $\P(Q_{n}=0)$, that capture the limiting behavior of $Q_{n}$. The first set provides a rather crude estimation for the first cumulants of the queue length process for any arrival process $A_{n}$ satisfying Assumption \ref{as1}. The second set, which is the subject of the next section, is derived for the specific case of a Gamma prior and is therefore expected to provide more accurate, robust approximations for the performance metrics.

We start by indicating how the asymptotic properties of the scaled arrival process give rise to a proper limiting random variable describing the stationary queue length. The asymptotic normality of $\hat{A}_{n}$ provides a link with the Gaussian random walk and nearly deterministic queues \cite{Sigman2011a,Sigman2011b}.
The main results in \cite{Sigman2011a,Sigman2011b} were obtained under the assumption that $\rho_n\sim 1-\beta/\sqrt{n}$, in which case it follows from \cite[Thm.~3]{Sigman2011b} that the rescaled stationary waiting time process converges to a reflected Gaussian random walk.

We shall also identify the Gaussian random walk as the appropriate scaling limit for our stationary system. However, since the normalized natural fluctuations of our system are given by $\mu_n/\sigma_n$ instead of $\sqrt{n}$, we assume that the load grows like $\rho_n \sim 1 - \frac{\beta}{\mu_n/\sigma_n}$. Hence, in contrast to \cite{Sigman2011a,Sigman2011b}, our systems' characteristics display larger natural fluctuations, due to the mixing factor that renders the arrivals. Yet, by matching this overdispersed demand with the appropriate hedge against variability, we again obtain Gaussian limiting behavior. This is not surprising, since we saw in Lemma \ref{gaussStep} that the increments start resembling Gaussian behavior for $n\rightarrow\infty$. The following result summarizes this.

\begin{theorem}
\label{gaussianThm}
Let $\Lambda_n$ be a non-negative random variable such that $(\Lambda_n-\mu_n)/\sigma_n$ is asymptotically standard normal, with $\mu_n$ and $\sigma_n$ as defined in \eqref{mm8}, and $\E[\Lambda_n^3]<\infty$ for all $n\in\mathbb{N}$. Then under Assumption \ref{as1}, for $n\rightarrow \infty$,
\begin{enumerate}
\item[{\rm (i)}] $\hat{Q}_{n} \Rightarrowd M_\beta$,
\item[{\rm (ii)}] $\mathbb{P}(Q_{n} = 0) \rightarrow \mathbb{P}(M_\beta=0)$,
\item[{\rm (iii)}] $\E\hat{Q}_{n} \rightarrow \E M_\beta$,
\item[{\rm (iv)}] $\Var \hat Q_n \rightarrow \Var\, M_\beta$,
\end{enumerate}
where $M_\beta$ is the all-time maximum of a random walk with i.i.d. normal increments with mean $-\beta$ and unit variance.
\end{theorem}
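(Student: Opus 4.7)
The setup is that by Lemma \ref{gaussStep}, applied with the hypothesis $\hat\Lambda_n\Rightarrowd N(0,1)$ plus Assumption \ref{as1}(a,b), the scaled centered arrivals $\hat A_n$ converge weakly to $N(0,1)$, and by \eqref{mm6} $\hat Q_n$ is distributed as the all-time maximum of the random walk $S_{k,n}:=\sum_{i=1}^k(\hat A_{i,n}-\beta)$, which has negative drift $-\beta$ and unit-variance steps. The limit $M_\beta=\sup_{k\ge 0}\sum_{i=1}^k(N_i-\beta)$ is the analogous maximum for i.i.d.\ standard normal steps. All four conclusions compare these two stationary quantities.

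For part (i) the plan is a standard truncation argument. By the weak convergence of $\hat A_n$ together with independence across $i$, $(S_{1,n},\dots,S_{K,n})\Rightarrowd (S_1^*,\dots,S_K^*)$ in $\mathbb{R}^K$ for every fixed $K$, so the continuous mapping theorem gives $\max_{0\le k\le K}S_{k,n}\Rightarrowd\max_{0\le k\le K}S_k^*$. The weak convergence $\hat Q_n\Rightarrowd M_\beta$ then follows from the usual triangle inequality once one proves the uniform tail estimate
\begin{equation*}
\lim_{K\to\infty}\limsup_{n\to\infty}\P\Bigl(\sup_{k>K}S_{k,n}>\eta\Bigr)=0,\qquad \eta>0.
\end{equation*}
Because the walk has drift $-\beta<0$, a Chernoff bound yields $\P(S_{k,n}>0)\le\bigl(\E \ee^{\theta(\hat A_n-\beta)}\bigr)^k$, so the tail estimate reduces to showing that the per-step mgf at some small $\theta>0$ is bounded by some $\rho<1$ \emph{uniformly in $n$}. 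I would establish this starting from \eqref{mm7}, expanding $\log M_n^\Lambda(\theta)$ to second order around the origin in the spirit of the proof of Corollary \ref{scaledLambdaLemma}, and using $\E\Lambda_n^3<\infty$ to control the remainder. The upshot is $\E \ee^{\theta(\hat A_n-\beta)}=1-\theta\beta+\theta^2/2+o(1)$, which is strictly below $1$ for small $\theta$ and $n$ large.

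For (ii)--(iv) I would apply Spitzer's identities \eqref{t5}, \eqref{t4}, \eqref{t4a} to the scaled walk, giving
\begin{equation*}
\P(\hat Q_n=0)=\exp\Bigl\{-\sum_{k=1}^\infty\tfrac{1}{k}\P(S_{k,n}>0)\Bigr\},\ \E\hat Q_n=\sum_{k=1}^\infty\tfrac{1}{k}\E(S_{k,n})^+,\ \Var\hat Q_n=\sum_{k=1}^\infty\tfrac{1}{k}\E[(S_{k,n})^+]^2,
\end{equation*}
with analogous series for $M_\beta$. Termwise convergence follows from the classical CLT: for each fixed $k$, $S_{k,n}\Rightarrowd N(-k\beta,k)$, and the functionals $\P(\cdot>0)$, $\E(\cdot)^+$, $\E[(\cdot)^+]^2$ converge to the corresponding Gaussian integrals (the latter two via uniform integrability of $(S_{k,n})^+$ and $[(S_{k,n})^+]^2$ in $n$ for fixed $k$, justified by the bounded third moment of $\hat A_n$). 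Interchange of limit and infinite sum is then carried out by dominated convergence, using the uniform Chernoff bound from the previous paragraph together with $\E(S_{k,n})^+\le\sqrt{k}$ and $\E[(S_{k,n})^+]^2\le k$ to dominate the $k$th summand by $c\,k^{\alpha}\rho^k$ with $\rho<1$ independent of $n$.

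The main obstacle is precisely this uniform-in-$n$ mgf/Chernoff bound on $\hat A_n-\beta$. One must lift the weak convergence of $\hat\Lambda_n$ to convergence of moment generating functions on a common complex strip around the origin. Provided this is done --- most naturally by combining $\E\Lambda_n^3<\infty$, the Poisson-mixture identity \eqref{mm7}, and the Taylor-remainder estimate already used in Corollary \ref{scaledLambdaLemma} --- both the truncation argument for (i) and the dominated-convergence argument for (ii)--(iv) go through, and the theorem follows.
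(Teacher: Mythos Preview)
Your approach hinges entirely on a uniform-in-$n$ Chernoff/mgf bound for $\hat A_n-\beta$, and this is where the argument breaks. The theorem assumes only $\E[\Lambda_n^3]<\infty$; it does \emph{not} assume that $\Lambda_n$ (or $A_n$) has a moment generating function at all. A Pareto-type prior with finite third moment and infinite mgf is a perfectly admissible choice of $\Lambda_n$, and for such $\Lambda_n$ the quantity $M_n^\Lambda(\theta)$ you propose to Taylor-expand is simply infinite for every $\theta>0$. The claim ``$\E \ee^{\theta(\hat A_n-\beta)}=1-\theta\beta+\theta^2/2+o(1)$'' therefore cannot be derived from the stated hypotheses: controlling a Taylor remainder for $\log M_n^\Lambda$ with a third-moment bound presupposes that $M_n^\Lambda$ exists in a neighbourhood of the origin, which is exactly what is missing. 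Since both your truncation argument for (i) and your dominated-convergence bound $c\,k^\alpha\rho^k$ for (ii)--(iv) rest on this Chernoff estimate, neither part goes through.

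The paper avoids exponential moments altogether. For (i) it shows uniform integrability of $(Y_{k,n})^+$ using only $\E Y_n^2=1+\beta^2$ and Chebyshev, and then invokes a general random-walk convergence theorem (Asmussen, Thm.~X.6.1). For (ii)--(iv) it uses the same Spitzer representation you do, but builds a summable-in-$k$, uniform-in-$n$ majorant from \emph{polynomial} moment inequalities: Chebyshev for (ii), and Nagaev's large-deviation inequality $\P(\bar S_k>y)\le C_r(k\sigma^2/y^2)^2+k\,\P(X>y/r)$ for (iii) and (iv), splitting the $x$-integral at $x=k$. The third-moment hypothesis is used precisely (and only) to dominate the variance series. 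If you want to salvage your outline under the stated assumptions, replace the Chernoff input by a moment-based bound of this type.
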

The proof of Theorem \ref{gaussianThm} is given in Appendix \ref{formalSec}. The following result shows that Theorem \ref{gaussianThm} also applies to Gamma mixtures, which is a direct consequence of Corollary \ref{scaledLambdaLemma}.
\begin{corollary}
Let $\Lambda_n\sim$ \normalfont{Gamma}$(a_n,b_n)$. Then under Assumption \ref{as2} the four convergence results of Theorem \ref{gaussianThm} hold true.
\end{corollary}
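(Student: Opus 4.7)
The plan is to recognize this as a direct application of Theorem \ref{gaussianThm}: once we verify that every hypothesis of that theorem is met for the Gamma prior under Assumption \ref{as2}, the four convergence statements follow with no further work. So the whole proof reduces to two bookkeeping checks: (a) that the hypotheses on $\Lambda_n$ in Theorem \ref{gaussianThm} hold when $\Lambda_n\sim\text{Gamma}(a_n,1/b_n)$, and (b) that Assumption \ref{as2} implies Assumption \ref{as1}.

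For (a) I would dispatch the easy items first. Non-negativity of $\Lambda_n$ is immediate from the definition of the Gamma distribution, and $\E[\Lambda_n^3]=a_n(a_n{+}1)(a_n{+}2)b_n^3<\infty$ for every $n$, so the third-moment condition is automatic. The only substantive requirement is the asymptotic standard normality of $\hat{\Lambda}_n=(\Lambda_n-\mu_n)/\sigma_n$; but this is precisely the computation already carried out inside the proof of Corollary \ref{scaledLambdaLemma}: the characteristic function $\varphi_{\hat{\Lambda}_n}(t)=\mathrm{e}^{-i\mu_n t/\sigma_n}(1-ib_n t/\sigma_n)^{-a_n}$ admits the expansion in \eqref{g13d} and converges pointwise to $\mathrm{e}^{-t^2/2}$, after which L\'evy's continuity theorem gives $\hat{\Lambda}_n\Rightarrowd N(0,1)$. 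I would simply cite this computation rather than repeat it.

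For (b) I would check Assumption \ref{as1}(a)--(c) in order using $\mu_n=a_nb_n$ and $\sigma_n^2=a_nb_n(b_n+1)$ from \eqref{t21}. Part (a) follows from $a_n,b_n\to\infty$, which drives both $\mu_n$ and $\sigma_n$ to infinity. Part (b) is the identity $\sigma_n^2/\mu_n=b_n+1\to\infty$. For part (c), the capacity prescription in Assumption \ref{as2}(b) is literally of the form $s_n=\mu_n+\beta\sigma_n$, and $(1-\rho_n)\mu_n/\sigma_n=\beta\mu_n/s_n=\beta\rho_n\to\beta$; the equivalent formulation $(1-\rho_n)\sqrt{a_n}\to\beta$ uses $\mu_n/\sigma_n=\sqrt{a_nb_n/(b_n+1)}=\sqrt{a_n}(1+o(1))$. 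All the hypotheses being in place, Theorem \ref{gaussianThm} yields (i)--(iv) verbatim.

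I do not foresee any real obstacle: the only potentially delicate ingredient is the CLT for $\hat{\Lambda}_n$, and that is already packaged in the preceding corollary. Everything else is parameter arithmetic on the Gamma moments. If there is any subtlety to watch, it is simply to keep the two ``natural'' scalings straight — the role played by $\sqrt{a_n}$ in Assumption \ref{as2}(b) versus the role played by $\mu_n/\sigma_n$ in Assumption \ref{as1}(c) — but the equivalence is quantitative and immediate from $b_n\to\infty$.
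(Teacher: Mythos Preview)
Your proposal is correct and matches the paper's approach exactly: the paper records this corollary as ``a direct consequence of Corollary \ref{scaledLambdaLemma}'', i.e., invoke the already-established asymptotic normality of $\hat{\Lambda}_n$ and apply Theorem \ref{gaussianThm}. Your write-up is simply a more explicit version of the same check, spelling out the finite-third-moment condition and the reduction of Assumption \ref{as2} to Assumption \ref{as1}.
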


It follows from Theorem \ref{gaussianThm} that the scaled stationary queueing process converges under \eqref{mm4} to a reflected Gaussian random walk. Hence, the performance measures of the original system should be well approximated by the performance measures of the reflected Gaussian random walk, yielding heavy-traffic approximations.

Like our original system, the Gaussian random walk falls in the classical setting of the reflected one-dimensional random walk, whose behavior is characterized by both Spitzer's identity and Pollaczek's formula. In particular, Pollaczek's formula gives rise to contour integral expressions for performance measures that are easy to evaluate numerically, also in heavy-traffic conditions. The numerical evaluation of such integrals is considered in \cite{Abate1993}. For $\E M_\beta$ such an integral is as follows
\begin{equation}
\label{g13e}
\E M_\beta = {-}\frac{1}{\pi}\int_0^\infty {\rm Re}\Bigl[\frac{1-\phi(-z)}{z^2}\Bigr]\dd y,
\end{equation}
with $\phi(z) = \exp(-\beta\,z+\tfrac12\,z^2)$, the Laplace transform of a normal random variable with mean $-\beta$ and unit variance, and $z=x+iy$ with an appropriately chosen real part $x$. Note that this integral involves complex-valued functions with complex arguments. Similar Pollaczek-type integrals exist for $\mathbb{P}(M_\beta=0)$ and $\Var M_\beta$; see \cite{Abate1993}. The following result simply rewrites these integrals in terms of a real integral and uses the fact that the scaled queue length process mimics the maximum of the Gaussian random walk for large $n$.

\begin{corollary}\label{abateThm}
Under Assumption \ref{as1}, the leading order behavior of $\mathbb{P}(Q_{n}=0)$, $\E Q_n$ and $\Var Q_n$ as $n\to\infty$ is characterized by, respectively,
\begin{equation}
\label{h1a}
 \exp\Bigl[\frac{1}{\pi} \int_0^\infty \frac{\beta/\sqrt{2}}{\tfrac12\beta^2+t^2}\,{\rm ln}\Bigl(1-e^{-\tfrac12\beta^2-t^2}\Bigr)\dd t\Bigr],\\
\end{equation}
\begin{equation}
\label{h1}
 \frac{\sqrt{2}\sigma_n}{\pi}\int_0^\infty \frac{t^2}{\tfrac12\beta^2+t^2}\, \frac{\exp(-\tfrac12\beta^2- t^2)}{1-\exp(-\tfrac12 \beta^2 - t^2)} \dd t,\\
 \end{equation}
 \begin{equation}
\label{h1b}
\frac{\sqrt{2}\beta\sigma_n^2}{\pi}\,\int_0^\infty \frac{t^2}{(\tfrac12 \beta^2+t^2)^2}\frac{\exp(-\tfrac12\beta^2- t^2)}{1-\exp(-\tfrac12 \beta^2 - t^2)} \dd t.
\end{equation}

%Furthermore
%\begin{equation*}
%\label{h3}
%\mathbb{P}(Q_{n}=0) \rightarrow \exp\Bigl[-\frac{\sqrt{2}}{\pi}\int_0^\infty \frac{\beta}{\beta^2+v^2}\,{\rm ln}(1-\exp({-}\tfrac12 \beta^2-\tfrac12 v^2))\, dv\Bigr].
%\end{equation*}
\end{corollary}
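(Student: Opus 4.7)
Parts (ii)--(iv) of Theorem \ref{gaussianThm} reduce the corollary to identifying the three expressions in \eqref{h1a}--\eqref{h1b} with $\P(M_\beta=0)$, $\sigma_n\E M_\beta$, and $\sigma_n^2\Var M_\beta$, respectively. Pulling out the $\sigma_n$-prefactors (which simply reinstate $Q_n=\sigma_n\hat Q_n$), it suffices to show that the right-hand sides of \eqref{h1a}, \eqref{h1} (without the $\sigma_n$) and \eqref{h1b} (without the $\sigma_n^2$) represent $\P(M_\beta=0)$, $\E M_\beta$ and $\Var M_\beta$.

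My plan is to combine Spitzer's identity with Craig's integral formula for the Gaussian tail. With $S_k\sim N(-k\beta,k)$ denoting the $k$th partial sum of the Gaussian random walk, Spitzer gives
\[
-\ln\P(M_\beta=0)=\sum_{k\ge 1}\tfrac{1}{k}\P(S_k>0),\ \ \E M_\beta=\sum_{k\ge 1}\tfrac{1}{k}\E S_k^+,\ \ \Var M_\beta=\sum_{k\ge 1}\tfrac{1}{k}\E(S_k^+)^2,
\]
and the standard identities $\P(S_k>0)=\Phi^c(\beta\sqrt{k})$, $\E S_k^+=\sqrt{k}\varphi(\beta\sqrt{k})-k\beta\Phi^c(\beta\sqrt{k})$, and $\E(S_k^+)^2=k(1+k\beta^2)\Phi^c(\beta\sqrt{k})-k\beta\sqrt{k}\varphi(\beta\sqrt{k})$ expand each summand in terms of the standard Gaussian density $\varphi$ and complementary c.d.f.\ $\Phi^c$. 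Applying Craig's formula
\[
\Phi^c(a)=\frac{1}{\pi}\int_0^{\pi/2}\exp\Bigl(-\frac{a^2}{2\sin^2\theta}\Bigr)\,d\theta\qquad (a>0)
\]
to every $\Phi^c(\beta\sqrt{k})$, together with the auxiliary identity $1/\sqrt{k}=(2/\sqrt{\pi})\int_0^\infty e^{-kt^2}\,dt$ (and one $\partial_\beta$-differentiation to generate the $\sqrt{k}\varphi(\beta\sqrt{k})$ factor), and performing the substitution $t=(\beta/\sqrt{2})\cot\theta$, under which $\beta^2/(2\sin^2\theta)=\beta^2/2+t^2$, turns every summand into a real integral in $t\in(0,\infty)$ carrying the common Gaussian kernel $\exp(-k(\beta^2/2+t^2))$. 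Swapping sum and integral and applying $\sum_{k\ge 1}x^k/k=-\ln(1-x)$, $\sum_{k\ge 1}x^k=x/(1-x)$, and $\sum_{k\ge 1}kx^k=x/(1-x)^2$ at $x=\exp(-\beta^2/2-t^2)$ then produces \eqref{h1a} immediately and \eqref{h1} after combining the two pieces coming from $\E S_k^+$.

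The only step that requires any care is the $\Var M_\beta$ computation, where the above route yields a linear combination of three real integrals in $t$ with rational factors $1/(\beta^2/2+t^2)$ and $1$ paired with the geometric-series kernels $e^{-\beta^2/2-t^2}/(1-e^{-\beta^2/2-t^2})$ and $e^{-\beta^2/2-t^2}/(1-e^{-\beta^2/2-t^2})^2$. A single integration by parts in $t$, exploiting $\partial_t[-t/(\beta^2/2+t^2)]=(t^2-\beta^2/2)/(\beta^2/2+t^2)^2$, produces the required cancellations and leaves the single integrand of \eqref{h1b}. The sum/integral interchanges are all easily justified by Tonelli's theorem (non-negative integrands) for the moment computations, and by monotone convergence inside the exponential for $\P(M_\beta=0)$, thanks to the geometric decay in $k$ of the kernel $\exp(-k(\beta^2/2+t^2))$.
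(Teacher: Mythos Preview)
Your approach is correct and genuinely different from the paper's. The paper starts from the Abate--Whitt Pollaczek-type representation
\[
c_n=\frac{(-1)^n n!}{\pi}\,\mathrm{Re}\!\int_0^\infty \frac{\ln\bigl(1-\exp(\beta z+\tfrac12 z^2)\bigr)}{z^{n+1}}\,dy,\qquad z=-x+iy,
\]
with $(-\ln\P(M_\beta=0),\E M_\beta,\Var M_\beta)=(c_0,c_1,c_2)$, fixes $x=\beta$ so that $\beta z+\tfrac12 z^2=-\tfrac12\beta^2-\tfrac12 y^2$, integrates by parts in the complex variable for $n\ge 1$, and finally substitutes $y=t\sqrt{2}$. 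You instead go back to Spitzer's series and realize each term via Craig's identity $\Phi^c(a)=\pi^{-1}\!\int_0^{\pi/2}\exp(-a^2/(2\sin^2\theta))\,d\theta$ together with the Gaussian integral $k^{-1/2}=\tfrac{2}{\sqrt{\pi}}\int_0^\infty e^{-kt^2}\,dt$, then sum geometrically and change variables $t=(\beta/\sqrt{2})\cot\theta$; this keeps the entire computation on the real line and avoids any reference to the contour-integral machinery. What the paper's route buys is a direct link to the Pollaczek framework used throughout (so the corollary literally specializes formulas already quoted); what your route buys is a self-contained probabilistic derivation that requires no complex analysis and makes the interchange of sum and integral transparent via nonnegativity. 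Your sketch of the variance step is a bit terse, but it checks out: writing $u=\tfrac12\beta^2+t^2$ and $F=e^{-u}/(1-e^{-u})$, the three pieces combine to $\tfrac{\beta}{\sqrt{2}\pi}\int_0^\infty\bigl[\tfrac{1}{u}F+\tfrac{t}{u}F'\bigr]dt$, and one integration by parts using $\partial_t(t/u)=(\tfrac12\beta^2-t^2)/u^2$ collapses this to $\tfrac{\sqrt{2}\beta}{\pi}\int_0^\infty\tfrac{t^2}{u^2}F\,dt$, which is \eqref{h1b}.
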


\begin{proof}
According to \cite[Eq.~(15)]{Abate1993},
\begin{equation*}
\label{z1}
{-}\,{\rm ln}\,[\mathbb{P}(M_\beta=0)] = c_0,\quad \E M_\beta = c_1, \quad \Var\, M_\beta = c_2,
\end{equation*}
where
\begin{equation*}
\label{z2}
c_n = \frac{(-1)^nn!}{\pi} \,{\rm Re}\Bigl[\int_0^\infty \frac{{\rm ln}\,(1-\exp(\beta\,z+\tfrac12 z^2))}{z^{n+1}} \dd y\Bigr],
\end{equation*}
in which $z={-}x+i\,y$, $y\geq 0$, and $x$ is any fixed number between 0 and $2\beta$. Take $x=\beta$, so that
\begin{equation*}
 \label{z3}
 \beta z+\tfrac12 z^2 = {-}\tfrac12\beta^2 - \tfrac12 y^2\leq 0,\quad y\geq 0.
 \end{equation*}
For $n=0$, this gives
\begin{align*}
c_0 &= \frac{1}{\pi}\,{\rm Re}\Bigl[\int_0^\infty \frac{{\rm ln}\,(1-\exp({-}\tfrac12 \beta^2-\tfrac12 y^2))}{{-}\beta+i\,y} \dd y\Bigr] \nonumber\\
&= {-}\frac{1}{\pi}\,\int_0^\infty \frac{\beta}{\beta^2+y^2}\,{\rm ln}\,(1-\exp({-}\tfrac12 \beta^2- \tfrac12 y^2)) \dd y\nonumber\\
\label{z4}
&= {-}\frac{1}{\pi}\,\int_0^\infty \frac{\beta/\sqrt{2}}{\tfrac12\beta^2+t^2}\,{\rm ln}\,(1-\exp({-}\tfrac12 \beta^2-t^2)) \dd t,
\end{align*}
where we used that
\begin{equation*}
\label{z5}
{\rm Re }\Bigl[\frac{1}{{-}\beta+i\, y}\Bigr] = \frac{{-}\beta}{\beta^2+y^2},
\end{equation*}
together with the substitution $y=t\sqrt{2}$. For $n=1,2,\ldots,$ partial integration gives
\begin{align*}
c_n &= \frac{(-1)^n n!}{\pi} \, {\rm Re}\Bigl[\int_0^\infty \frac{{\rm ln}(1-\exp(-\tfrac12\beta^2-\tfrac12 y^2))}{({-}\beta+i\,y)^{n+1}} \dd y\nonumber\\
&= \frac{(-1)^{n-1}(n-1)!}{\pi}\,{\rm Im}\Bigl[\int_0^\infty {\rm ln}(1-\exp(-\tfrac12\beta^2-\tfrac12 y^2))\dd \Bigl(\frac{1}{(-\beta+i\,y)^n}\Bigr)\Bigr]\nonumber\\
\label{z6}
&= {-}\frac{(-1)^{n-1}(n-1)!}{\pi} {\rm Im}\Bigl[ \int_0^\infty \frac{y}{(-\beta+i\,y)^n}\,\frac{\exp(-\tfrac12\beta^2-\tfrac12 y^2)}{1-\exp(-\tfrac12\beta^2-\tfrac12 y^2)}\dd y\Bigr],
\end{align*}
where we have used that
\begin{equation*}
\label{z7}
{\rm Im}\Bigl[\frac{{\rm ln}(1-\exp(-\tfrac12\beta^2-\tfrac12 y^2))}{(-\beta+i\,y)^n}\Bigr]\Bigl|_0^\infty\Bigr. = 0.
\end{equation*}
Using
\begin{equation*}
\label{z8}
\frac{1}{(-\beta+i\,y)^n} = (-1)^n\,\frac{(\beta+i\,y)^n}{(\beta^2+y^2)^n},
\end{equation*}
we then get
\begin{equation*}
\label{z9}
c_n = \frac{(n-1)!}{\pi}\,{\rm Im}\,\Bigl[\int_0^\infty \frac{y(\beta+i\,y)^n}{(\beta^2+y^2)^n}\,\frac{\exp(-\tfrac12\beta^2-\tfrac12 y^2)}{1-\exp(-\tfrac12\beta^2-\tfrac12 y^2)}\dd y\Bigr],
\end{equation*}
which after substitution of $y=t\sqrt{2}$ gives \eqref{h1} and \eqref{h1b}.
\end{proof}

\section{Robust heavy-traffic approximations}\label{sec:robust_analysis}
We shall now establish robust heavy-traffic approximations for the canonical case of Gamma-Poisson mixtures; see \eqref{r0}.

\begin{theorem}\label{saddlepointThm}
Let $a_n,b_n$ and $s_n$ be as in Assumption \ref{as2}. Then the leading order behavior of $\E Q_n$ is given by
\begin{equation}
\label{r1}
\frac{\sqrt{2}\,\beta_n}{\pi}\Bigl(\frac{b_n+\rho_n}{1-\rho_n}\Bigr)\,\int_{0}^\infty \frac{t^2}{\tfrac12\beta^2_n+t^2}\,\frac{\exp({-}\tfrac12\beta^2_n-t^2)}{1-\exp({-}\tfrac12\beta^2_n-t^2)} \dd t\,(1+o(1)),
\end{equation}
where
\begin{equation}
\label{r2}
\beta_n^2 = s_n\Bigl(\frac{1-\rho_n}{b_n+1}\Bigr)^2\Bigl(1+\frac{b_n}{\rho_n}\Bigr).
\end{equation}
Furthermore, the leading order behavior of $\mathbb{P}(Q_{n}=0)$ and $\Var  Q_n$ is given by
\begin{equation*}
\label{r3}
\exp\Bigl[\frac{1}{\pi}\,\frac{b_n+\rho_n}{b_n+1}\,\int_0^\infty \frac{\beta_n/\sqrt{2}}{\tfrac12\beta^2_n+t^2}\,{\rm ln}\,\Bigl(1-\ee^{{-}\tfrac12\beta^2_n-t^2}\Bigr)\dd t\Bigr],
\end{equation*}
and
\begin{equation}
\label{r4}
\frac{\beta_n^3/\sqrt{2}}{\pi}\Bigl(\frac{b_n+\rho_n}{1-\rho_n}\Bigr)^2\Bigl(\frac{b_n+1}{b_n+\rho_n}+1\Bigr)\int_0^\infty \frac{t^2}{(\tfrac12 \beta_n+t^2)^2}\, \frac{\exp({-}\tfrac12\beta_n-t^2)}{1-\exp({-}\tfrac12\beta_n^2-t^2)}\dd t,
\end{equation}
respectively.
\end{theorem}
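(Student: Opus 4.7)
My starting point would be Pollaczek's formulas \eqref{t7}--\eqref{t8} specialized to the Gamma--Poisson pgf $\tilde A_n(z)=(1+b_n(1-z))^{-a_n}$. Introducing
\[
f(z)\;=\;s_n\log z\,+\,a_n\log(1+b_n(1-z)),
\]
one has $z^{s_n}-\tilde A_n(z)=z^{s_n}\bigl(1-e^{-f(z)}\bigr)$, and the logarithmic derivative appearing in each Pollaczek integrand decomposes as $\frac{(z^{s_n}-\tilde A_n(z))'}{z^{s_n}-\tilde A_n(z)}=\frac{s_n}{z}+\frac{f'(z)}{e^{f(z)}-1}$. The first summand is easy to evaluate against each of the weights $\frac{1}{1-z}$, $\frac{-z}{(1-z)^2}$, $\log(z/(z-1))$ by residues and contributes either zero or an explicit constant; the real work lies in the Gaussian-like kernel $f'(z)/(e^{f(z)}-1)$.

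\textbf{Saddle analysis.} Next I would carry out a steepest-descent analysis of the remaining integrals. Using $\mu_n=a_n b_n$ and $\sigma_n^2=\mu_n(b_n+1)$, the expansion
\[
f(1+w)\;=\;\beta\sigma_n\,w-\tfrac12(s_n+\mu_n b_n)\,w^2+O(s_n b_n^2\,w^3)
\]
identifies the relevant scales near $z=1$: the real maximizer of $f$ sits at $w^\ast=\beta\sigma_n/(s_n+\mu_n b_n)$, of order $(1-\rho_n)/b_n$, and the Gaussian transverse width is of order $1/\sqrt{s_n+\mu_n b_n}$. Taking $\varepsilon=w^\ast$ in the Pollaczek contour and substituting $z=1+w^\ast+iy$ followed by $y=t\sqrt{2/|f''(1)|}$, the quadratic approximation to $f$ turns the exponent into $\tfrac12\beta_n^2+t^2$, and retaining the $O(1-\rho_n)$ correction from the cubic term of $f$ (which is precisely what distinguishes the refinement here from the pure heavy-traffic limit of Corollary \ref{abateThm}) fixes the constant $\beta_n$ in \eqref{r2}, equivalently $\beta_n^2=\beta^2(b_n+\rho_n)/(b_n+1)$. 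The prefactors in \eqref{r1}, \eqref{r3} and \eqref{r4} then fall out of the Jacobian of the substitution combined with the on-contour values of $f'(z)$ and the three weight functions, and the integrals reduce (to leading order) to the Gaussian Pollaczek integrals already written in the proof of Corollary \ref{abateThm}.

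\textbf{Main obstacle.} The main obstacle will be to make this heuristic rigorous under the joint asymptotics of Assumption \ref{as2}. Two points are delicate. First, $\tilde A_n(z)$ has a singularity at $z=1+1/b_n$, so the domain of validity of the Taylor expansion of $f$ shrinks with $n$; one needs to verify that the saddle $w^\ast$ and the relevant part of the deformed contour stay inside $|z-1|\ll 1/b_n$, and to bound the cubic and higher remainders uniformly on that shrinking neighborhood so that they produce only a $(1+o(1))$ multiplicative error -- a more delicate task than the corresponding one in \cite{Janssen2015} because $b_n\to\infty$ interacts non-trivially with the saddle scale $1/\sqrt{s_n+\mu_n b_n}$. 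Second, the tails of the deformed contour, away from $z=1$, must be shown to be exponentially negligible, which requires a lower bound of order $\sqrt{s_n}$ on $\operatorname{Re} f(z)$ on the bulk of $|z|=1+w^\ast$. This is the ``substantially more involved'' technical work flagged at the end of the introduction to Section \ref{sec:robust_analysis}. Once these uniform bounds are in place, the same machinery applied to the three weighted integrands delivers \eqref{r1}, \eqref{r3} and \eqref{r4} simultaneously.
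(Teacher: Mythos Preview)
Your plan is on the right track—the paper also starts from Pollaczek's formula and applies a saddle-point analysis to the function you call $f$ (the paper works with $g=-f/s_n$), and your identification of the two main obstacles (shrinking domain of analyticity because the radius $1+1/b_n\to 1$, and control of the contour tails) is exactly what the paper singles out as the novel difficulty compared with \cite{Janssen2015}.

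Where your sketch diverges from the paper is in the parametrization and in how $\beta_n$ emerges. The paper does \emph{not} Taylor-expand at $z=1$, parametrize by the vertical line $z=1+w^\ast+iy$, and then invoke a cubic correction. Instead it computes the exact saddle $z_{\rm sp}=1+\frac{1-\rho_n}{b_n+\rho_n}$ (note this differs from your quadratic maximizer $w^\ast=\frac{1-\rho_n}{1+\rho_n b_n}$), integrates on the circle $|z|=z_{\rm sp}$, and performs the de~Bruijn substitution $z=z(v)$ defined implicitly by $g(z(v))=g(z_{\rm sp})-\tfrac12 v^2 g''(z_{\rm sp})$. This makes the exponent \emph{exactly} quadratic in $v$, so no cubic correction is needed: the quantity $\beta_n^2=s_n\,g''(z_{\rm sp})(z_{\rm sp}-1)^2$ drops straight out of the exact saddle-point data and reduces algebraically to \eqref{r2}. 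The technical core of the proof is then a self-contained lemma bounding the convergence radius of the power series $z(v)=z_{\rm sp}+iv+\sum_{k\ge 2}c_k(iv)^k$; this is done by showing $\mathrm{Re}\,G(z)\ge\tfrac49\rho_n^2(b_n+\rho_n^{-1})/(b_n+\rho_n)$ on the disk $|z-z_{\rm sp}|\le r_n:=\tfrac{1}{2b_n}-(z_{\rm sp}-1)$ and then applying Rouch\'e, which yields a radius of order $1/b_n$—large enough that truncating the $v$-integral there incurs only exponentially small error.

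The one step I would flag in your route is the claim that the cubic term of the expansion at $z=1$ ``fixes'' $\beta_n$ to exactly \eqref{r2}. Using your $w^\ast$ and $|f''(1)|$ alone already gives $\beta^2\rho_n(b_n+1)/(1+\rho_n b_n)$, which is asymptotically equivalent to, but not equal to, $\beta^2(b_n+\rho_n)/(b_n+1)$; recovering the latter specifically from a cubic correction is not obvious and would need genuine additional bookkeeping. The de~Bruijn substitution sidesteps this entirely by centering at the true saddle from the outset, which is why the paper adopts it.
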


The proof of Theorem \ref{saddlepointThm} requires asymptotic evaluation of the Pollaczek-type integrals \eqref{t7}-\eqref{t8}, for which we shall use a \textit{non-standard} saddle-point method.
The saddle point method in its standard form is typically suitable for large deviation regimes, for instance excess probabilities, and it cannot be applied to asymptotically characterize other stationary measures such as the mean or mass at zero.
Indeed, in the presence of overdispersion the saddle point converges to one (as $n\to\infty$), which is a singular point of the integrand, and renders the standard saddle point method useless. Our non-standard saddle point method, originally proposed by \cite{debruijn} and also applied in \cite{Janssen2015}, aims specifically to overcome this challenge.
Subsequently, we apply the non-standard saddle-point method to turn these contour integrals into practical approximations.
In contrast to the setting of \cite{Janssen2015}, the analyticity radius tends to one in the setting with overdispersion, which is a singular point of the integrand.
For the proof of Theorem \ref{saddlepointThm}, we therefore modify the special saddle-point method developed in \cite{Janssen2015} to account for this circumstance.

\begin{proof}
Our starting point is the probability generating function of the number of arrivals per time slot, given in \eqref{r0}, which is analytic for $|z|<1+1/b_n=:r$. Under Assumption \ref{as2}, we consider $\E {Q}_n$ as given in  \eqref{t7}. We set
\begin{equation}
\label{a7}
g(z) = -{\rm ln }\,z+\frac{1}{s_n}\,{\rm ln }\big[\tilde{A}_{n}(z)\big] = -{\rm ln }\,z - \frac{a_n}{s_n}\,{\rm ln }\left(1+(1-z)b_n\right),
\end{equation}
to be considered in the entire complex plane with branch cuts $(-\infty,0]$ and $[r,\infty)$. The relevant saddle point $z_{\rm sp}$ is the unique zero $z$ of $g'(z)$ with $z\in(1,r_0)$. Since
\begin{equation}
\label{a8}
g'(z) = -\frac{1}{z} + \frac{\rho_n}{1+(1-z)b_n},
\end{equation}
this yields,
\begin{equation}
\label{a9}
1+(1-z_{\rm sp})b_n = \rho_n z_{\rm sp},\quad {\rm i.e., } \quad z_{\rm sp} = 1+\frac{1-\rho_n}{\rho_n+b_n}.
\end{equation}
We then find
\begin{equation}
\label{a10}
\E {Q}_n = \frac{s_n}{2\pi i} \int_{|z| = 1+\eps} \frac{g'(z)}{z-1}\,\frac{\exp(s_n\,g(z))}{1-\exp(s_n\,g(z))}\dd z,
\end{equation}
and take $1+\eps = z_{\rm sp}$. There are no problems with the branch cuts since we consider $\exp(s_ng(z))$ with integer $s_n$. \\

We continue as in \cite{Janssen2015} and thus we intend to substitute $z=z(v)$ in the integral in \eqref{a10}, where $z(v)$ satisfies
\begin{equation*}
\label{k1}
g(z(v)) = g(z_{\rm sp})-\tfrac12\,v^2\,g''(z_{\rm sp}) =: q(v)
\end{equation*}
on a range ${-}\tfrac12\delta_n \leq v\leq \tfrac12 \delta_n$ with $\delta_n \to 0$ as $n\to\infty$. 
Note that, this range depends on $n$, whereas these bounds $\pm \tfrac{1}{2} \delta_n$ remained bounded away from zero in \cite{Janssen2015}. 
This severely complicates the present analysis. 
We consider the approximate representation
\begin{equation}
\label{k2}
\frac{-s_n\,g''(z_{\rm sp})}{2\pi i}\int_{-\tfrac12 \delta_n}^{\tfrac12 \delta_n}\frac{v}{z(v)-1}\,\frac{\exp(s_n\,q(v))}{1-\exp(s_n\, q(v))} \dd v
\end{equation}
of $\E {Q}_n$. We have to operate here with additional care, since both the analyticity radius $r=1+1/b_n$ and the saddle point $z_{\rm sp}$ outside zero $r_0$ tend to 1 as $n\rightarrow\infty$. Specifically, proceeding under the assumptions that $(1-\rho_n)^2a_n$ is bounded while $a_n\rightarrow\infty$ and $b_n\geq 1$, see Assumption \ref{as2}, we have from \eqref{a9} that
\begin{equation}\label{a19}
z_{\rm sp}-1=\frac{1-\rho_n}{b_n+\rho_n} = \frac{1-\rho_n}{b_n} + O\Bigl(\frac{1-\rho_n}{b^2_n}\Bigr),
\end{equation}
where the $O$-term is small compared to $(1-\rho_n)/b_n$ when $b_n\rightarrow\infty$. Next, we approximate $r_0$, using that $r_0>1$ satisfies
\begin{equation*}
\label{a20}
{-}{\rm ln}\, r_0 - \frac{\rho_n}{b_n}\, {\rm ln}\,(1+(1-r_0)b_n) = 0.
\end{equation*}
Write $r_0 = 1+u/b_n$, so that we get the equation
\begin{align*}
0 &= {-}{\rm ln}\,\left(1+\frac{u}{b_n}\right) - \frac{\rho_n}{b_n}\,{\rm ln }(1-u)\nonumber \\
\label{a21}
&= {-}\frac{u}{b_n}\Bigl(1-\rho_n-\tfrac12\Bigl(\frac{1}{b_n}+\rho_n\Bigr)u-\tfrac{1}{3}\Bigl(\frac{-1}{b^2_n}+\rho_n\Bigr)u^2+\cdots\Bigr),
\end{align*}
where we have used the Taylor expansion of ${\rm ln}(1+x)$ at $x=0$. Thus we find
\begin{equation*}
\label{a22}
u=\frac{2(1-\rho_n)}{\rho_n+1/b_n}+O(u^2) = 2(1-\rho_n)+O((1-\rho_n)^2)+O\Bigl(\frac{1-\rho_n}{b_n}\Bigr),
\end{equation*}
and so,
\begin{equation*}
\label{a23}
r_0 = 1+2\,\frac{1-\rho_n}{b_n}+O\Bigl(\frac{(1-\rho_n)^2}{b_n}\Bigr) + O\Bigl(\frac{1-\rho_n}{b^2_n}\Bigr).
\end{equation*}
In \eqref{k2} we choose $\delta_n$ so large that the integral has converged within exponentially small error using $\pm\delta_n$ as integration limits, and, at the same time, so small that there is a convergent power series
\begin{equation}
\label{a26}
z(v) = z_{\rm sp}+iv+ \sum_{k=2}^\infty c_k(iv)^k, \qquad \text{for } |v| \leq \tfrac12 \delta_n.
\end{equation}
To achieve these goals, we supplement the information on $g(z)$, as given by $\eqref{a7}-\eqref{a9}$, by
\begin{equation}
\label{a27}
g''(z)=\frac{1}{z^2}+\frac{\rho_nb_n}{(1+(1-z)b_n)^2},\quad g''(1) = 1+\rho_nb_n,\quad g''(z_{\rm sp}) =\frac{1}{z_{\rm sp}^2}\Bigl(1+\frac{b_n}{\rho_n}\Bigr).
\end{equation}
Now
\begin{equation*}
\label{a36}
\exp(s_n\,q(v)) = \exp(s_n\,g(z_{\rm sp}))\exp(-\tfrac12\,s_n\,g''(z_{\rm sp})\,v^2),
\end{equation*}
and
\begin{equation*}
\label{a37} s_n\, g''(z_{\rm sp})v^2 = s_n\,b_nv^2(1+o(1)) = a_n(b_n\,v)^2(1+o(1)).
\end{equation*}
Therefore, \eqref{k2} approximates $\E {Q}_n$ with exponentially small error when we take $\tfrac12 \delta_n$ of the order $1/b_n$.

We next aim at showing that we have a power series for $z(v)$ as in \eqref{a26} that converges for $|v|\leq\tfrac12\delta_n$ with $\tfrac12\delta_n$ of the order $1/b_n$.

\begin{lemma}
Let
\begin{equation*}
\label{a38}
r_n:=\frac{1}{2\,b_n}-(z_{\rm sp} -1 ),\quad m_n:= \tfrac{2}{3}\rho_nr_n\sqrt{\frac{b_n+\rho_n^{-1}}{b_n+\rho_n}},
\end{equation*}
where we assume $r_n>0$. Then \eqref{a26} holds with real coefficients $c_k$ satisfying
\begin{equation}
\label{a39}
|c_k|\leq\frac{r_n}{m_n^k},\quad k=2,3,\ldots.
\end{equation}
\end{lemma}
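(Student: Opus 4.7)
The strategy is to invert $g$ locally around $z_{\rm sp}$ by a square-root substitution, express the coefficients $c_k$ through the Lagrange--B\"urmann formula, and then bound them by Cauchy's inequality on $|u|=r_n$.

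Since $g'(z_{\rm sp})=0$ and $g$ is analytic on $\mathbb{C}\setminus((-\infty,0]\cup[r,\infty))$, its Taylor expansion in $u:=z-z_{\rm sp}$ converges on the disc of radius $\min\{z_{\rm sp},\,r-z_{\rm sp}\}=r-z_{\rm sp}=r_n+1/(2b_n)$, strictly larger than $r_n$. Factoring out the leading square term,
\begin{equation*}
g(z_{\rm sp}+u)-g(z_{\rm sp})=\tfrac12\,g''(z_{\rm sp})\,u^2\,\Psi(u),
\end{equation*}
defines an analytic $\Psi$ with $\Psi(0)=1$ and real Taylor coefficients (because $g$ is real-analytic on the positive reals near $z_{\rm sp}$). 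Provided $\Psi$ has no zeros on $|u|\leq r_n$ (to be verified below), the principal square root lets us define the analytic function $H(u):=u\sqrt{\Psi(u)}$ with $H(0)=0$ and $H'(0)=1$. The defining identity $g(z(v))-g(z_{\rm sp})=-\tfrac12\,v^2\,g''(z_{\rm sp})$ becomes $H(u(v))^2=-v^2$; the branch with $u(v)=iv+O(v^2)$ gives $H(u(v))=iv$, and the analytic implicit function theorem supplies the convergent expansion (\ref{a26}) with real $c_k$. Lagrange--B\"urmann then yields
\begin{equation*}
c_k=\frac{1}{k}\,[u^{k-1}]\,\Psi(u)^{-k/2},\qquad k\geq 1,
\end{equation*}
so that Cauchy's inequality on $|u|=r_n$ reduces (\ref{a39}) to the lower bound
\begin{equation*}
\min_{|u|\leq r_n}|\Psi(u)|\;\geq\;(m_n/r_n)^2=\tfrac{4}{9}\,\rho_n^2\,\frac{b_n+\rho_n^{-1}}{b_n+\rho_n}.
\end{equation*}

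Using the saddle-point identity $1+(1-z_{\rm sp})b_n=\rho_n z_{\rm sp}$ one rewrites the local expansion as
\begin{equation*}
g(z_{\rm sp}+u)-g(z_{\rm sp})=-\ln\Bigl(1+\frac{u}{z_{\rm sp}}\Bigr)-\frac{\rho_n}{b_n}\ln\Bigl(1-\frac{b_n u}{\rho_n z_{\rm sp}}\Bigr),
\end{equation*}
from which the explicit coefficients of $\Psi$ read
\begin{equation*}
\gamma_k=\frac{2}{(k+2)\,z_{\rm sp}^k}\cdot\frac{(-1)^k+(b_n/\rho_n)^{k+1}}{1+b_n/\rho_n},\qquad k\geq 1.
\end{equation*}
The constraint $r_n\leq 1/(2b_n)$ forces $|u/z_{\rm sp}|\leq r_n/z_{\rm sp}$ and $|b_n u/(\rho_n z_{\rm sp})|\leq 1/(2\rho_n z_{\rm sp})$ on $|u|=r_n$, both strictly less than $1$, so every term in $\sum_{k\geq 1}|\gamma_k|r_n^k$ is geometrically controllable. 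Applying the maximum modulus principle to $\Psi-1$ transfers any such bound from the circle to the whole closed disc, simultaneously excluding zeros of $\Psi$ and certifying that $\sqrt{\Psi}$ is single-valued.

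The delicate step --- and the main obstacle --- is calibrating this tail sum precisely enough to recover the numerical constant $\tfrac{4}{9}$, equivalently the $2/3$ prefactor in $m_n$. A crude geometric majorant of the form $\sum 2/(k+2)\cdot 2^{-k}\approx 0.545$ comes in just below $1$ but is not sharp enough for the advertised constant through a one-line estimate. To obtain the claimed bound one has to retain the weight $1/(k+2)$ in $\gamma_k$, for instance by splitting $\gamma_k$ into its $(-1)^k$ and $(b_n/\rho_n)^{k+1}$ parts and summing the resulting series in closed form via
\begin{equation*}
\sum_{k\geq 1}\frac{x^k}{k+2}=-\frac{1}{x^2}\Bigl(\ln(1-x)+x+\tfrac12 x^2\Bigr),
\end{equation*}
or equivalently by using a second-order integral remainder for $\ln(1\pm\cdot)$. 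The resulting inequality $|\Psi(u)|\geq (m_n/r_n)^2$ on $|u|\leq r_n$ both rules out zeros of $\Psi$ and delivers $|c_k|\leq r_n/(k\,m_n^k)\leq r_n/m_n^k$, completing the argument.
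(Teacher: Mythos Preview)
Your overall framework coincides with the paper's: define $\Psi(u)=G(z_{\rm sp}+u)$ via
\[
G(z)=\frac{2\bigl(g(z)-g(z_{\rm sp})\bigr)}{g''(z_{\rm sp})\,(z-z_{\rm sp})^2},\qquad F(z)=(z-z_{\rm sp})\sqrt{G(z)},
\]
reduce the coefficient bound to a lower bound $|\Psi(u)|\ge (m_n/r_n)^2$ on $|u|\le r_n$, and then harvest $|c_k|\le r_n/m_n^k$ (you via Lagrange--B\"urmann and Cauchy on $\Psi^{-k/2}$, the paper via Rouch\'e and Cauchy on $z(v)-z_{\rm sp}$; these two endgames are equivalent).

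The genuine gap is precisely the step you flag as ``the main obstacle'': you never actually \emph{prove} $|\Psi(u)|\ge (m_n/r_n)^2$. Your coefficient route asserts that summing $\sum_{k\ge 1}|\gamma_k|r_n^k$ in closed form, or invoking an integral remainder, will yield the bound, but neither is carried out. The coefficient sum must be controlled uniformly in $(\rho_n,b_n)$, not just in the limiting regime, and the triangle-inequality estimate $|\Psi|\ge 1-\sum|\gamma_k|r_n^k$ introduces slack that makes hitting the exact constant $\tfrac{4}{9}\rho_n^2(b_n+\rho_n^{-1})/(b_n+\rho_n)$ non-obvious.

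The paper's argument is exactly your parenthetical ``second-order integral remainder'' alternative, executed cleanly. It writes
\[
G(z)=2\int_0^1\!\!\int_0^1 \frac{g''\bigl(z_{\rm sp}+st(z-z_{\rm sp})\bigr)}{g''(z_{\rm sp})}\,t\,\dd s\,\dd t,
\]
observes that for $|z-z_{\rm sp}|\le r_n$ the argument $\zeta$ stays in $|\zeta-1|\le 1/(2b_n)$, and then bounds the real part of each summand in $g''(\zeta)=1/\zeta^2+\rho_n b_n/(1+(1-\zeta)b_n)^2$ from below by the single elementary fact
\[
\min_{|\xi-1|\le 1/2}{\rm Re}\bigl(1/\xi^2\bigr)=\tfrac{4}{9},
\]
applied once to $\xi=\zeta$ and once to $\xi=1+(1-\zeta)b_n$. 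This gives ${\rm Re}\,g''(\zeta)\ge\tfrac{4}{9}(1+\rho_n b_n)$, hence ${\rm Re}\,G(z)\ge\tfrac{4}{9}(1+\rho_n b_n)/(1+b_n/\rho_n)=(m_n/r_n)^2$, which simultaneously rules out zeros of $G$ (so $\sqrt{G}$ is single-valued) and delivers $|F(z)|>m_n$ on $|z-z_{\rm sp}|=r_n$. The constant $4/9$ is not tuned after the fact; it is exactly the boundary minimum of ${\rm Re}(1/\xi^2)$, which is why $m_n$ carries the prefactor $2/3$. Your sketch points at this route but does not take it.
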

\begin{proof}
We let
\begin{equation}
\label{a40}
G(z):=\frac{2(g(z)-g(z_{\rm sp}))}{g''(z_{\rm sp})(z-z_{\rm sp})^2}.
\end{equation}
Then $G(z_{\rm sp})=1$ and so we can write \eqref{k1} as
\begin{equation}
\label{a41}
F(z):=(z-z_{\rm sp})\sqrt{G(z)} = i v
\end{equation}
when $|z-z_{\rm sp}|$ is sufficiently small. Since $F(z_{\rm sp})=0$, $F'(z_{\rm sp})=1$, the B\"urmann-Lagrange inversion theorem implies validity of a power series as in \eqref{a26}, with real $c_k$ since $G(z)$ is positive and real for real $z$ close to $z_{\rm sp}$. We therefore just need to estimate the convergence radius of this series from below.

To this end, we start by showing that
\begin{equation}
\label{a42}
{\rm Re}[g''(z)] > \frac{4}{9}\,\rho_n^2\frac{b_n+\rho_n^{-1}}{b_n+\rho_n},\quad |z-z_{\rm sp}|\leq r_n.
\end{equation}
For this, we consider the representation
\begin{equation}
\label{a43}
G(z) = 2\int_{0}^1\int_0^1 \frac{g''(z_{\rm sp}+s\,t(z-z_{\rm sp}))}{g''(z_{\rm sp})} \,t\dd s\dd t.
\end{equation}
We have for $\zeta\in\mathbb{C}$ and $|\zeta-1|\leq 1/2b_{n}\leq 1/2$ from \eqref{a27} that
\begin{equation}
\label{a44}
{\rm Re}[g''(\zeta)] = {\rm Re}(1/\zeta^2) + \rho_nb_n\,{\rm Re}\Bigl[\Bigl(\frac{1}{1+(1-\zeta)b_n}\Bigr)^2\Bigr]\geq \tfrac{4}{9}(1+\rho_nb_n).
\end{equation}
To show the inequality in \eqref{a44}, it suffices to show that
\begin{equation}
\label{a45}
\min_{|\xi-1|\leq 1/2} {\rm Re}\Bigl(\frac{1}{\xi^2}\Bigr) = \frac{4}{9}.
\end{equation}
The minimum in \eqref{a45} is assumed at the boundary $|\xi-1|=1/2$, and for a boundary point $\xi$, we write
\begin{equation*}
\label{a46}
\xi= 1+\tfrac12\cos\theta+\tfrac12 i \sin\theta, \quad 0\leq \theta\leq 2\pi,
\end{equation*}
so that
\begin{equation*}
\label{a47}
{\rm Re}\Bigl(\frac{1}{\xi^2}\Bigr) = \frac{1+\cos\theta+\tfrac{1}{4}\cos 2\theta}{(\tfrac{5}{4}+\cos\theta)^2}.
\end{equation*}
Now
\begin{equation*}
\label{a48}
\frac{\dd}{d\theta} \Bigl[\frac{1+\cos\theta+\tfrac{1}{4}\cos2\theta}{(\tfrac{5}{4}+\cos\theta)^2}\Bigr] = \frac{\sin \theta\,(1-\cos \theta)}{4(\tfrac{5}{4}+\cos\theta)^3}
\end{equation*}
vanishes for $\theta=0,\pi,2\pi$, where ${\rm Re}(1/\xi^2)$ assumes the values $4/9$, 4, 4/9, respectively. This shows \eqref{a45}.

We use \eqref{a45} with $\xi = \zeta$ and with $\xi=1+(1-\zeta)b_n$, with
\begin{equation}
 \label{a49}
 \zeta = \zeta(s,t) = z_{\rm sp} + s\,t\,(z-z_{\rm sp}),\quad 0\leq s,\, t\leq 1,
 \end{equation}
 where we take $\zeta$ such that $|\zeta-1|\leq 1/2b_n$. It is easy to see from
 $1<z_{\rm sp}<1+1/2b_n$ that $|\zeta-1|\leq 1/2b_n$ holds when $|z-z_{\rm sp}|\leq r_n=1/2b_n-(z_{\rm sp}-1)$. We have, furthermore, from \eqref{a9} that $0<g''(z_{\rm sp})\leq 1+b_n/\rho_n$. Using this, together with \eqref{a44} where $\zeta$ is as in \eqref{a49}, yields
 \begin{equation*}
 \label{a50}
 {\rm Re}[G(z)] \leq \frac{4}{9}\,\frac{1+\rho_nb_n}{1+b_n/\rho_n}\,2\,\int_0^1\int_0^1 t\,\dd s\,\dd t = \tfrac{4}{9}\,\rho_n^2\,\frac{b_n+\rho_n^{-1}}{b_n+\rho_n}
 \end{equation*}
when $|z-z_{\rm sp}|\leq r_n$, and this is \eqref{a42}.
We therefore have from \eqref{a41} that
\begin{equation*}
\label{a51}
|F(z)|>r_n\cdot\frac{2}{3}\rho_n\sqrt{\frac{b_n+\rho_n^{-1}}{b_n+\rho_n}} = m_n,\quad |z-z_{\rm sp}|=r_n.
\end{equation*}
Hence, for any $v$ with $|v|\leq m_n$, there is exactly one solution $z=z(v)$ of the equation $F(z)-iv=0$ in $|z-z_{\rm sp}|\leq r_n$ by Rouch\'e's theorem. This $z(v)$ is given by
\begin{equation*}
\label{a52}
z(v) = \frac{1}{2\pi i}\,\int_{|z-z_{\rm sp}|=r_n} \frac{F'(z)\,z}{F(z)-iv}\dd z,
\end{equation*}
and depends analytically on $v$, $|v|\leq m_n$. From $|z(v)-z_{\rm sp}|\leq r_n$, we can finally bound the power series coefficients $c_k$ according to
\begin{equation*}
\label{a53}
|c_k| = \Bigl|\frac{1}{2\pi i}\int_{|iv|=m_n} \frac{z(v)-z_{\rm sp}}{(iv)^{k+1}}\dd(iv)\Bigr| \leq \frac{r_n}{m_n^k},
\end{equation*}
and this completes the proof of the lemma.
\end{proof}

\begin{remark}
We have $z_{\rm sp}-1=o(1/b_n)$, see \eqref{a19},  and so
\begin{equation*}
\label{a54}
r_n = \frac{1}{2b_n}(1+o(1)),\quad m_n = \frac{1}{3b_n}(1+o(1)),
\end{equation*}
implying that the radius of convergence of the series in \eqref{a26} is indeed of order $1/b_n$ (since we have assumed $b_n\geq 1$).
\end{remark}

We let $\delta_n=m_n$, and we write for $0\leq v\leq \tfrac12\delta_n$
\begin{equation*}
\label{a55}
\frac{v}{z(v)-1}+\frac{{-}v}{z({-}v)-1} = \frac{-2iv\,{\rm Im}(z(v))}{|z(v)-1|^2},
\end{equation*}
where we have used that all $c_k$ are real, so that $z(-v)=z(v)^*$, where $ ^*$ denotes the complex conjugate. Now from \eqref{a39} and realness of the $c_k$, we have
\begin{equation}
\label{a56}
{\rm Im}(z(v)) = v+\sum_{l=1}^\infty c_{2l+1}(-1)^l\,v^{2l+1} = v+O(v^3),
\end{equation}
and in similar fashion
\begin{equation}
\label{a57}
|z(v)-1|^2 = (z_{\rm sp}-1)^2+v^2+O((z_{\rm sp}-1)^2v^2) + O(v^4)
\end{equation}
when $0\leq v\leq \tfrac12\delta_n$. The order terms in \eqref{a56}-\eqref{a57} are negligible in leading order, and so we get for $\mu_{Q_{n}}$ via \eqref{k2} the leading order expression
\begin{equation*}
\label{a58}
\frac{{-}s_n\,g''(z_{\rm sp})}{2\pi i}\,\int_0^{\tfrac12\delta_n}\frac{{-}2iv^2}{(z_{\rm sp}-1)^2+v^2}\,\frac{\exp(s_n\,q(v))}{1-\exp(s_n\, q(v))}\dd v.
\end{equation*}
We finally approximate $q(v) = g(z_{\rm sp})-\tfrac12 g''(z_{\rm sp})v^2$.
There is a $z_1$, $1\leq z_1\leq z_{\rm sp}$ such that
\begin{equation*}
\label{a59}
g(z_{\rm sp}) = {-}\tfrac12(z_{\rm sp}-1)^2\,g''(z_1),
\end{equation*}
and, see \eqref{a19} and \eqref{a27},
\begin{equation*}
\label{a60}
g''(z_1) = g''(z_{\rm sp}) + O((1-\rho_n)b_n).
\end{equation*}
Hence
\begin{align}
s_n\,q(v) &= {-}\tfrac12 s_n\,g''(z_{\rm sp})\,[(z_{\rm sp}-1)^2+v^2]+O((1-\rho_n)b_ns_n(z_{\rm sp}-1)^2),\nonumber\\
&= {-}\tfrac12 s_n\,g''(z_{\rm sp})[(z_{\rm sp}-1)^2+v^2]+O((1-\rho_n)^2a_n),\label{a61}
\end{align}
where \eqref{a19} has been used and $a_nb_n = s_n(1+o(1))$ Therefore, the $O$-term in \eqref{a61} tends to 0 by our assumption that $(1-\rho_n)^2a_n$ is bounded. Thus, we get for $\mu_{Q_{n}}$ in leading order
\begin{equation}\label{a62}
\frac{s_n g''(z_{\rm sp})}{\pi} \int_{0}^{\tfrac12\delta_n}\frac{v^2}{(z_{\rm sp}-1)^2+v^2}\,
\frac{\exp(-\tfrac12 g''(z_{\rm sp})s_n((z_{\rm sp}-1)^2+v^2))}{1-\exp(-\tfrac12 g''(z_{\rm sp})s_n((z_{\rm sp}-1)^2+v^2))} \dd v,
\end{equation}
When we substitute $t=v\sqrt{s_n\,g''(z_{\rm sp})/2}$ and extend the integration in \eqref{a62} to all $t\geq 0$ (at the expense of an exponentially small error), we get for $\mu_{Q_{n}}$ in leading order
\begin{equation*}
\label{a63}
=\frac{1}{\pi}\,\sqrt{2\,s_n\,g''(z_{\rm sp})}\,\int_{0}^\infty \frac{t^2}{\tfrac12\beta_n^2}\,\frac{\exp({-}\tfrac12\beta^2_n-t^2)}{1-\exp({-}\tfrac12\beta^2_n-t^2)}\dd t,
\end{equation*}
where
\begin{equation*}
\label{a64}
\beta^2_n = s_n\,g''(z_{\rm sp})(z_{\rm sp}-1)^2.
\end{equation*}
Now using \eqref{a9} and \eqref{a27}, we get the result of Theorem \ref{saddlepointThm}. A separate analysis of $\beta_n$ is provided in Section \ref{convRobust}.
\end{proof}

\section{Main insights \& numerics}
Through Theorem \ref{saddlepointThm}, we can write \eqref{r1} as
\begin{equation*}
\label{ra1}
\E {Q}_n = \tilde{\sigma}_n\,\E[ M_{\beta_n}]
\end{equation*}
with
\begin{equation}
\label{ra5}
\tilde{\sigma}_n = \beta_n \Bigl(\frac{b_n+\rho_n}{1-\rho_n}\Bigr).
\end{equation}

This robust approximation for $\E {Q}_n$ is suggestive of the following two properties that extend beyond the mean system behavior, and hold at the level of approximating the queue by $\sigma_n$ times the Gaussian random walk:

\begin{itemize}
\item[\rm (i)] At the process level, the space should be normalized with $\sigma_n$, as in \eqref{mm7}. The approximation \eqref{r1} suggests that it is better to normalize with $\tilde{\sigma}_n$. Although $\tilde \sigma_n\to\sigma_n$ for $n\to\infty$, the $\tilde \sigma_n$ is expected to lead to sharper approximations for finite $n$.
\item[\rm (ii)] Again at the process level, it seems better to replace the original hedge $\beta$ by the robust hedge $\beta_n$. This thus means that the original system for finite $n$ is approximated by a Gaussian random walk with drift $-\beta_n$. Apart from this approximation being asymptotically correct for $n\to \infty$, it is also expected to approximate the behavior better for finite $n$.
\end{itemize}

\subsection{Convergence of the robust hedge\label{convRobust}}
We next examine the accuracy of the heavy-traffic approximations for $\E {Q}_n$ and $\sigma^2_Q$, following Corollary \ref{abateThm} and Theorem \ref{saddlepointThm}. We expect the robust approximation to be considerably better than the classical approximation when $\beta_n$ and $\tilde{\sigma}_n$ differ substantially from their limiting counterparts. Before substantiating this claim numerically, we present a result on the convergence rates of $\beta_n$ to $\beta$ and $\tilde{\sigma}_n$ to $\sigma_n$.

\begin{proposition}\label{gammanProp}
Let $a_n,b_n$ and $s_n$ as in Assumption \ref{as2}. Then
\begin{equation}
\label{r3a}
\beta_n^2 = \beta^2\Bigl(1 - \frac{1}{1+b_n+\sigma_n/\beta}\Bigr).
\end{equation}
\end{proposition}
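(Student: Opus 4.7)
The plan is to prove this by direct algebraic manipulation, using only the definition of $\beta_n^2$ from Theorem \ref{saddlepointThm}, the heavy-traffic scaling $s_n = \mu_n + \beta\sigma_n$ from Assumption \ref{as2}(b), and the Gamma-Poisson moment formulas \eqref{t21}. No asymptotics or limiting arguments are needed; the identity is exact.

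First, I would rewrite the two ``$\rho_n$ and $b_n$'' factors in \eqref{r2} in a form where $\beta$ appears explicitly. From $s_n = \mu_n + \beta\sigma_n$ we have $1-\rho_n = \beta\sigma_n/s_n$, so
\begin{equation*}
s_n\Bigl(\frac{1-\rho_n}{b_n+1}\Bigr)^2 = \frac{\beta^2\sigma_n^2}{s_n(b_n+1)^2}.
\end{equation*}
Also, $1+b_n/\rho_n = (\rho_n + b_n)/\rho_n$, and $\rho_n s_n = \mu_n$, so
\begin{equation*}
\beta_n^2 \;=\; \frac{\beta^2\sigma_n^2}{s_n(b_n+1)^2}\cdot\frac{\rho_n+b_n}{\rho_n} \;=\; \frac{\beta^2\sigma_n^2(\rho_n+b_n)}{\mu_n(b_n+1)^2}.
\end{equation*}

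Next I would use \eqref{t21}, specifically $\sigma_n^2 = \mu_n(b_n+1)$, which cancels one factor of $\mu_n(b_n+1)$ and yields the compact intermediate identity
\begin{equation*}
\beta_n^2 \;=\; \beta^2\cdot\frac{\rho_n+b_n}{b_n+1} \;=\; \beta^2\Bigl(1 - \frac{1-\rho_n}{b_n+1}\Bigr).
\end{equation*}
It then remains to show that $(1-\rho_n)/(b_n+1) = 1/(1+b_n+\sigma_n/\beta)$. Using $1-\rho_n = \beta\sigma_n/s_n$ and $s_n = \mu_n + \beta\sigma_n = \sigma_n^2/(b_n+1) + \beta\sigma_n$ (the first equality by $\mu_n = \sigma_n^2/(b_n+1)$), one gets
\begin{equation*}
\frac{1-\rho_n}{b_n+1} = \frac{\beta\sigma_n}{s_n(b_n+1)} = \frac{\beta\sigma_n}{\sigma_n^2 + \beta\sigma_n(b_n+1)} = \frac{1}{\sigma_n/\beta + b_n + 1},
\end{equation*}
which plugged into the intermediate identity gives \eqref{r3a}.

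Since the proposition is an exact algebraic identity between the definitions in \eqref{r2} and \eqref{t21} combined with the scaling $s_n = \mu_n + \beta\sigma_n$, there is no real obstacle; the only point requiring slight care is choosing the right substitutions in the right order so that $\mu_n$ is eliminated in favor of $\sigma_n$ and $b_n$ via $\mu_n = \sigma_n^2/(b_n+1)$, which is what makes the denominator collapse to $1+b_n+\sigma_n/\beta$.
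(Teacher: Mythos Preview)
Your proof is correct and takes essentially the same approach as the paper: direct algebraic manipulation using $s_n=\mu_n+\beta\sigma_n$, $\sigma_n^2=\mu_n(b_n+1)$, and the definition \eqref{r2}. The paper passes through the equivalent intermediate form $\beta_n^2=\beta^2\frac{b_n}{b_n+1}\bigl(1+\tfrac{a_n}{s_n}\bigr)$, which equals your $\beta^2\frac{\rho_n+b_n}{b_n+1}$, and then simplifies in the same way.
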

\begin{proof}
From \eqref{r2}, we have
\begin{align*}
\beta_n^2 &= s_n\Bigl(\frac{1-\rho_n}{b_n+1}\Bigr)^2\Bigl(1+\frac{b_n}{\rho_n}\Bigr)= \frac{1}{s_n}\Bigl(\frac{s_n-a_nb_n}{b_n+1}\Bigr)^2\Bigl(1+\frac{s_n}{a_n}\Bigr)\nonumber\\
\label{x1}
&= \frac{1}{s_n}\frac{\beta^2\,a_nb_n(b_n+1)}{(b_n+1)^2}\Bigl(1+\frac{s_n}{a_n}\Bigr) = \beta^2\,\frac{b_n}{b_n+1}\,\Bigl(1+\frac{a_n}{s_n}\Bigr) =:\beta^2\,\bar{F}_n.
\end{align*}
Now, 
\begin{align*}
\bar{F_n} &= \frac{b_n}{b_n+1}\,\Bigl(1+\frac{a_n}{s_n}\Bigr) = \frac{b_n}{b_n+1}+\frac{1}{b_n+1}\,\frac{a_nb_n}{s_n}\nonumber\\
&= 1-\frac{1}{b_n+1}\,\Bigl(1-\frac{a_nb_n}{s_n}\Bigr) = 1-\frac{1}{b_n+1}\,\frac{\beta\,\sigma_n}{s_n}\nonumber\\
&= 1-\frac{1}{b_n+1}\,\frac{1}{1+\frac{\mu_n}{\beta\sigma_n}} = 1-\frac{1}{b_n+1+\frac{1}{\beta}\sqrt{a_nb_n(b_n+1)}},
\end{align*}
which together with $\sigma_n^2=a_nb_n(b_n+1)$ proves the proposition.
\end{proof}
Note that $\beta_n$ always approaches $\beta$ from below. Also, \eqref{r3a} shows that $b_n$ is the dominant factor in determining the rate of convergence of $\beta_n$.

\begin{proposition}\label{sigmanProp}
Let $\tilde{\sigma}_n$ as in \eqref{ra5}. Then
\begin{equation*}
\tilde{\sigma}_n = \sigma_n + b_n\beta_n + O(1).
\end{equation*}
\end{proposition}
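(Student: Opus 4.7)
The plan is to reduce the claim to a direct algebraic identity plus a controlled Taylor estimate for $\beta_n/\beta$ coming from Proposition \ref{gammanProp}. The key observation is that $\tilde\sigma_n$ simplifies remarkably nicely when we substitute the relation $1-\rho_n = \beta\sigma_n/s_n$ and the Gamma identity $\sigma_n^2 = \mu_n(b_n+1)$.

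First I would rewrite
\[
\tilde\sigma_n = \beta_n\,\frac{b_n+\rho_n}{1-\rho_n} = \frac{\beta_n}{\beta}\cdot\frac{(b_n+\rho_n)\,s_n}{\sigma_n},
\]
using $(1-\rho_n)^{-1} = s_n/(\beta\sigma_n)$. Next I would split $s_n = \mu_n + \beta\sigma_n$ and use $\mu_n/\sigma_n = \sigma_n/(b_n+1)$ (which follows from $\sigma_n^2 = a_nb_n(b_n+1) = \mu_n(b_n+1)$) to obtain
\[
\frac{(b_n+\rho_n)s_n}{\sigma_n} \;=\; \sigma_n\,\frac{b_n+\rho_n}{b_n+1} + \beta(b_n+\rho_n).
\]
The first term equals $\sigma_n - \sigma_n(1-\rho_n)/(b_n+1) = \sigma_n - \beta\rho_n$ (again using $\sigma_n(1-\rho_n) = \beta\mu_n/(s_n/\sigma_n)\cdot(\ldots)$, or more directly $\sigma_n(1-\rho_n) = \beta\sigma_n^2/s_n = \beta\mu_n(b_n+1)/s_n$). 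Collecting,
\[
\frac{(b_n+\rho_n)s_n}{\sigma_n} \;=\; \sigma_n + \beta b_n,
\]
and hence
\[
\tilde\sigma_n \;=\; \frac{\beta_n}{\beta}\bigl(\sigma_n+\beta b_n\bigr) \;=\; \frac{\beta_n}{\beta}\,\sigma_n + \beta_n b_n.
\]

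It then remains to show $(\beta_n/\beta)\,\sigma_n = \sigma_n + O(1)$, i.e.\ $\sigma_n(\beta_n/\beta - 1) = O(1)$. Here I would invoke Proposition \ref{gammanProp}, which gives
\[
\frac{\beta_n}{\beta} \;=\; \sqrt{1-\frac{1}{1+b_n+\sigma_n/\beta}} \;=\; 1 - \frac{1}{2(1+b_n+\sigma_n/\beta)} + O\!\left(\frac{1}{(1+b_n+\sigma_n/\beta)^2}\right).
\]
Multiplying by $\sigma_n$, the leading error is $-\sigma_n/\bigl(2(1+b_n+\sigma_n/\beta)\bigr)$, which is bounded (its limit is $\beta/2$ since $\sigma_n/(b_n+\sigma_n/\beta) \to \beta$ as $b_n = o(\sigma_n)$ holds because $\sigma_n^2 = \mu_n(b_n+1)$ with $\mu_n\to\infty$; and the remainder is $o(1)$), giving $\sigma_n(\beta_n/\beta - 1) = O(1)$, which finishes the proof.

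There is no real obstacle here: the proof is a short identity followed by a one-term Taylor expansion of $\sqrt{1-x}$. The only point requiring a little care is verifying that the simplification $(b_n+\rho_n)s_n/\sigma_n = \sigma_n + \beta b_n$ is exact (no error term), which is what makes the $O(1)$ bound clean — all of the slack is absorbed into the factor $\beta_n/\beta$ and controlled by \eqref{r3a}.
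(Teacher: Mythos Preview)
Your proof is correct and follows essentially the same route as the paper: both arguments first establish the exact identity $\tilde\sigma_n = (\beta_n/\beta)\,\sigma_n + \beta_n b_n$ by direct algebra (you via the relations $1-\rho_n=\beta\sigma_n/s_n$ and $\sigma_n^2=\mu_n(b_n+1)$, the paper by substituting $a_n,b_n$ throughout), and then both invoke Proposition~\ref{gammanProp} to conclude $(\beta_n/\beta)\sigma_n=\sigma_n+O(1)$. The only cosmetic difference is that the paper phrases the final estimate as $\sigma_n(1+O(1/(\sqrt{a_n}\,b_n)))$ rather than expanding $\sqrt{1-x}$, but the content is identical.
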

\begin{proof}
Straightforward calculations give
\begin{align*}
\tilde{\sigma}_n &= \beta_n\,\Bigl(\frac{s_nb_n+a_nb_n}{s_n-a_nb_n}\Bigr) \nonumber\\
&= \frac{\beta_n}{\beta}\,\frac{b_n}{\sigma_n}\,(s_n+a_n)
= \frac{\beta_n}{\beta}\,\sqrt{\frac{b_n}{a_n(b_n+1)}}\left(a_n(b_n+1)+\beta\sqrt{a_nb_n(b_n+1)}\right)\nonumber\\
&= \frac{\beta_n}{\beta}\left(\sqrt{a_nb_n(b_n+1)}+\beta b_n\right) = \frac{\beta_n}{\beta}\,\sigma_n + \beta_n b_n.
\end{align*}
Applying Proposition \ref{gammanProp} together with the observation
\begin{equation*}
\sigma_n \sqrt{1 - \frac{1}{1+b_n+\sigma_n/\beta}} = \sigma_n(1 + O(1/\sqrt{a_n}b_n)) = \sigma_n + O(1)
\end{equation*}
yields the result.
\end{proof}

In Figure \ref{fig:convHedge}, we visualize the convergence speed of both parameters in case $\mu_n=n$, $\sigma_n = n^\delta$ with $\delta=0.7$ and $\beta=1$. This implies $a_n = n/(n^{2\delta}-1)$ and $b_n = n^{2\delta}-1$.

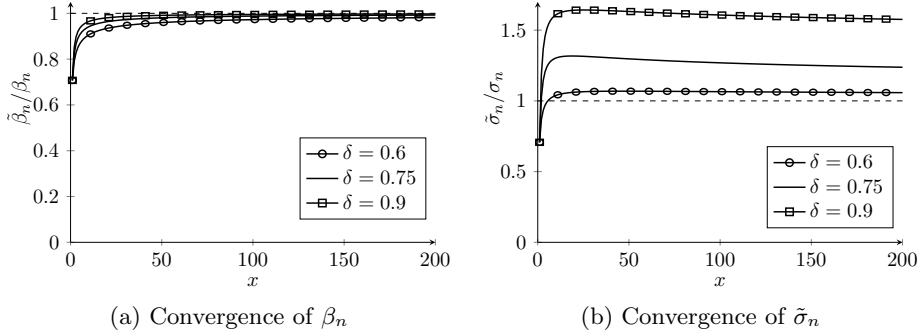
\begin{figure}
\centering
\subfloat[][Convergence of $\beta_n$]{
\centering
\begin{tikzpicture}[scale = 0.7]
\begin{axis}[
	xmin = 0,
	xmax = 200,
	ymin = 0,
	ymax = 1.05,
	xlabel = {$x$},
	ylabel = {$\tilde{\beta}_n/\beta_n$},
	y label style={at={(0.04,0.75)}},
	axis line style={->},
	axis lines = left,
	legend cell align=left,
	legend style = {at = {(axis cs: 195,0.1)},anchor = south east},
	yscale = 0.8,
	xscale = 1
]

\addplot[thick,mark = o,mark repeat = 10, mark size = 2] table[x=n,y=d06] {gamman.txt};
\addplot[thick,mark repeat = 10, mark size = 2] table[x=n,y=d075] {gamman.txt};
\addplot[thick,mark = square,mark repeat = 10, mark size = 2] table[x=n,y=d09] {gamman.txt};

\addplot[dashed] coordinates { (0,1) (200,1) };
\legend{$\delta = 0.6$, $\delta=0.75$, $\delta=0.9$};
\end{axis}
\end{tikzpicture}
}
\subfloat[][Convergence of $\tilde{\sigma}_n$]{
\centering
\begin{tikzpicture}[scale = 0.7]
\begin{axis}[
	xmin = 0,
	xmax = 200,
	ymin = 0,
	ymax = 1.7,
	xlabel = {$x$},
	ylabel = {$\tilde{\sigma}_n/\sigma_n$},
	y label style={at={(0.06,0.75)}},
	axis line style={->},
	axis lines = left,
	legend cell align=left,
	legend style = {at = {(axis cs: 195,0.1)},anchor = south east},
	yscale = 0.8,
	xscale = 1
]

\addplot[thick,mark = o, mark repeat = 10, mark size = 2] table[x=n,y=d06] {sigman.txt};
\addplot[thick,mark repeat = 10, mark size = 2] table[x=n,y=d075] {sigman.txt};
\addplot[thick,mark = square, mark repeat = 10, mark size = 2] table[x=n,y=d09] {sigman.txt};

\addplot[dashed] coordinates { (0,1) (200,1) };
\legend{$\delta = 0.6$, $\delta=0.75$, $\delta=0.9$};
\end{axis}
\end{tikzpicture}
}
\caption{Convergence of the robust hedge.}
\label{fig:convHedge}
\end{figure}
\noindent
We observe that $\beta_n$ starts resembling $\beta$ fairly quickly, as predicted by Proposition \ref{gammanProp}; $\tilde{\sigma}_n$, on the other hand, converges extremely slowly to its limiting counterpart. Since $\E{Q}_n$ and $\Var Q_n$ are approximated by $\tilde{\beta}_n$ and $\tilde{\sigma}_n$, multiplied by a term that remains almost constant as $n$ grows, the substitution of $\sigma_n$ by $\tilde{\sigma}_n$, is essential for obtaining accurate approximations, as we illustrate further in the next subsection.

\subsection{Comparison between heavy-traffic approximations}
%Before deriving the approximate values for the performance measures, one must choose $a_n$ and $b_n$, as they are the vital ingredient in Theorem \ref{saddlepointThm}. In order to directly link the degree of overdispersion with accuracy, we apply the capacity sizing rule
We set $\mu_n=n$ and $\sigma^2_n=n^{2\delta}$ with $\delta>\tfrac{1}{2}$, so that  $s_n = n+\beta n^{\delta}$, and $a_n =n/(n^{2\delta-1}-1)$ and $b_n = n^{2\delta-1}-1$.
\begin{table}
\centering
\begin{tabular}{|r|r|rrr|rrr|}
\hline
$s_n$     & $\rho_n$ & $\E Q_n$ & \eqref{h1} & \eqref{r1} & $\sqrt{\Var Q_n}$ & \eqref{h1b} & \eqref{r4} \bigstrut \\
\hline
5     & 0.609 & 0.343 & 0.246 & 0.363 & 1.002 & 0.835 & 0.978 \bigstrut[t] \\
10    & 0.683 & 0.535 & 0.400 & 0.551 & 1.239 & 1.063 & 1.216 \\
50    & 0.815 & 1.405 & 1.168 & 1.405 & 1.995 & 1.817 & 1.971 \\
100   & 0.855 & 2.113 & 1.824 & 2.105 & 2.445 & 2.270 & 2.420 \\
500   & 0.920 & 5.446 & 5.006 & 5.412 & 3.923 & 3.762 & 3.899 \bigstrut[b] \\
\hline
\end{tabular}
\caption{Numerical results for the Gamma-Poisson case with $\beta=1$ and $\delta=0.6$.}
\label{gammaPoisson1}
\end{table}
\begin{table}
\centering
\begin{tabular}{|r|r|rrr|rrr|}
\hline
$s_n$     & $\rho_n$ & $\E Q_n$ & \eqref{h1} & \eqref{r1} & $\sqrt{\Var Q_n}$ & \eqref{h1b} & \eqref{r4} \bigstrut \\
\hline
5     & 0.550 & 0.462 & 0.284 & 0.479 & 1.162 & 0.896 & 1.130 \bigstrut[t]\\
10    & 0.587 & 0.852 & 0.521 & 0.855 & 1.570 & 1.213 & 1.528 \\
50    & 0.668 & 3.197 & 2.093 & 3.106 & 3.025 & 2.433 & 2.947 \\
100   & 0.700 & 5.561 & 3.784& 5.377 & 3.983 & 3.270 & 3.887\\
500   & 0.766 & 19.887 & 14.741 & 19.202 & 7.514 & 6.455 & 7.361 \bigstrut[b]\\
\hline
\end{tabular}
\caption{Numerical results for the Gamma-Poisson case with $\beta=1$ and $\delta=0.8$.}
\label{gammaPoisson2}
\end{table}
\begin{table}
\centering
\begin{tabular}{|r|r|rrr|rrr|}
\hline
$s_n$     & $\rho_n$ & $\E Q_n$ & \eqref{h1} & \eqref{r1} & $\sqrt{\Var Q_n}$ & \eqref{h1b} & \eqref{r4} \bigstrut \\
\hline
% Table generated by Excel2LaTeX from sheet 'Sheet1'
5     & 0.949 & 11.532 & 11.306 & 11.495 & 3.634 & 3.559 & 3.602 \bigstrut[t] \\
10    & 0.961 & 17.565 & 17.268 & 17.548 & 4.474& 4.398 & 4.444 \\
50    & 0.979 & 46.368 & 45.869 & 46.418 & 7.241 & 7.168 & 7.218 \\
100   & 0.984 & 70.340 & 69.735 & 70.430 & 8.910 & 8.839 & 8.888 \\
500   & 0.991 & 184.900 & 183.989 & 185.108 & 14.422 & 14.357 & 14.404 \bigstrut[b]\\
\hline
\end{tabular}
\caption{Numerical results for the Gamma-Poisson case with $\beta=0.1$ and $\delta=0.6$.}
\label{gammaPoisson3}
\end{table}
\begin{table}
\centering
\begin{tabular}{|r|r|rrr|rrr|}
\hline
$s_n$     & $\rho_n$ & $\E Q_n$ & \eqref{h1} & \eqref{r1} & $\sqrt{\Var Q_n}$ & \eqref{h1b} & \eqref{r4} \bigstrut \\
\hline
5     & 0.931 & 15.730 & 15.209 & 15.909 & 4.276 & 4.127 & 4.233 \bigstrut[t]\\
10    & 0.939 & 27.561 & 26.672 & 27.958 & 5.652 & 5.466 & 5.605 \\
50    & 0.955 & 100.660 & 97.967 & 102.070 & 10.760 & 10.476 & 10.698 \\
100   & 0.961 & 175.591 & 171.360 & 177.818 & 14.189 & 13.855 & 14.117 \\
500   & 0.971 & 638.097 & 626.346 & 644.105 & 26.963 & 26.490 & 26.864 \bigstrut[b]\\
\hline
\end{tabular}
\caption{Numerical results for the Gamma-Poisson case with $\beta=0.1$ and $\delta=0.8$.}
\label{gammaPoisson4}
\end{table}
Tables \ref{gammaPoisson1} to \ref{gammaPoisson4} present numerical results for various parameter values. The exact values are calculated using the method in Appendix \ref{numprocs}.
Several conclusions are drawn from these tables. 
Observe that the heavy-traffic approximations based on the Gaussian random walk, \eqref{h1} and \eqref{h1b}, capture the right order of magnitude for both $\E Q_n$ and $\Var Q_n$. However, the values are off, in particular for small $s_n$ and relatively low $\rho_n := \E[A_{n}] / s_n$. The inaccuracy also increases with the level of overdispersion. In contrast, the approximations that follow from Theorem \ref{saddlepointThm}, \eqref{r1} and \eqref{r4} are remarkably accurate. Even for small systems with $s_n = 5$ or 10, the approximations for $\E Q_n$ are within 6$\%$ of the exact value for small $\rho_n$ and within $2\%$ for $\rho_n$ close to 1. For $\sigma_Q^2$, these percentages even reduce to $3\%$ and $1\%$, respectively. For larger values of $s_n$ these relative errors naturally reduce further. Overall, we observe that the approximations improve for heavily loaded systems, and the corrected approximations are particularly useful for systems with increased overdispersion.

\appendix

\bibliographystyle{plain}
\bibliography{bibliography}

\begin{thebibliography}{10}

\bibitem{Abate1993}
J.~Abate, G.L. Choudhury, and W.~Whitt.
\newblock Calculation of the ${GI}/{G}/1$ waiting-time distribution and its
  cumulants from {P}ollaczek's formulas.
\newblock {\em Archiv fur Elektronik und Ubertragungstechnik (International
  Journal of Electronics and Communication)}, 47(5/6):311--321, 1993.

\bibitem{rouche}
I.J.B.F. Adan, J.S.H. van Leeuwaarden, and E.M.M. Winands.
\newblock On the application of {R}ouch\'e's theorem in queueing theory.
\newblock {\em Operations Research Letters}, 34(3):355--360, 2006.

\bibitem{Asmussen2003}
S.~Asmussen.
\newblock {\em Applied {P}robability and {}Queues}.
\newblock Springer-Verlag, New York, second edition edition, 2003.

\bibitem{Avramidis:2004}
A.N. Avramidis, A.~Deslauriers, and P.~L'Ecuyer.
\newblock Rate-based daily arrival process models with application to call
  centers.
\newblock {\em Management Science}, 50(7):893--908, 2004.

\bibitem{Bassamboo2010}
A.~Bassamboo, R.S. Randhawa, and A.~Zeevi.
\newblock Capacity sizing under parammeter uncertainty: Safety staffing
  principles revisited.
\newblock {\em Management Science}, 56(10):1668--1686, 2010.

\bibitem{Bassamboo2009}
A.~Bassamboo and A.~Zeevi.
\newblock On a data-driven method for staffing large call centers.
\newblock {\em Operations Research}, 57(3):714--726, 2009.

\bibitem{boon2017pollaczek2}
M.A.A. Boon, A.J.E.M. Janssen, and J.S.H. {van Leeuwaarden}.
\newblock Heavy-traffic limits for dimensioning fixed-cycle intersections.
\newblock {\em Working paper}, 2017.

\bibitem{boon2017pollaczek}
M.A.A. Boon, A.J.E.M. Janssen, and J.S.H. {van Leeuwaarden}.
\newblock Pollaczek contour integrals for the fixed-cycle traffic-light queue.
\newblock {\em arXiv:1701.02872 (preprint)}, 2017.

\bibitem{Boudreau1962}
P.E. Boudreau, J.S. Griffin~Jr., and M.~Kac.
\newblock An elementary queueing problem.
\newblock {\em The American Mathematical Monthly}, 69(8):713--724, 1962.

\bibitem{Brown2005}
L.D. Brown, N.~Gans, A.~Mandelbaum, A.~Sakov, H.~Shen, S.~Zeltyn, and L.~Zhao.
\newblock Statistical analysis of a telephone call center: a queueing-science
  perspective.
\newblock {\em Journal of the American Statistical Association},
  100(469):36--50, 2005.

\bibitem{Chen2001}
B.P.K. Chen and S.~G. Henderson.
\newblock Two issues in setting call center staffing levels.
\newblock {\em Annals of Operations Research}, 108(1):175--192, 2001.

\bibitem{Cohen1982}
J.W. Cohen.
\newblock {\em The {S}ingle {S}erver {Q}ueue}, volume~8 of {\em North-Holland
  Series in Applied Mathematics and Mechanics}.
\newblock North-Holland Publishing Co., Amsterdam, second edition, 1982.

\bibitem{Cox1955}
D.R. Cox.
\newblock Some statistical models connected with series of events.
\newblock {\em Journal of the Royal Statistical Society}, 17(2):129--164, 1955.

\bibitem{debruijn}
N.G. de~Bruijn.
\newblock {\em Asymptotic {M}ethods in {A}nalysis}.
\newblock Dover Publications Inc., New York, third edition, 1981.

\bibitem{Gans2003}
N.~Gans, G.~Koole, and A.~Mandelbaum.
\newblock Telephone call centers: Tutorial, review, and research prospects.
\newblock {\em Manufacturing \& Service Operations Management}, 5(2):79--141,
  2003.

\bibitem{Grandell1997}
J.~Grandell.
\newblock {\em Mixed {P}oisson Processes}.
\newblock Monographs on Statistics and Applied Probability. Chapman \& Hall,
  1997.

\bibitem{Gurvich2010}
I.~Gurvich, J.~Luedtke, and T.~Tezcan.
\newblock Staffing call-centers with uncertain demand forecasts: a
  chance-constrained optimization approach.
\newblock {\em Management science}, 56(7):1093--1115, 2010.

\bibitem{Janssen2005}
A.J.E.M. Janssen and J.S.H. van Leeuwaarden.
\newblock Analytic computation schemes for the discrete-time bulk service
  queue.
\newblock {\em Queueing Systems}, 50(2):141--163, January 2005.

\bibitem{Janssen2008}
A.J.E.M. Janssen and J.S.H. van Leeuwaarden.
\newblock Back to the roots of the ${M}/{D}/s$ queue and the works of {E}rlang,
  {C}rommelin, and {P}ollaczek.
\newblock {\em Statistica Neerlandica}, 62(3):299--313, 2008.

\bibitem{Janssen2015}
A.J.E.M. Janssen, J.S.H. van Leeuwaarden, and B.W.J. Mathijsen.
\newblock Novel heavy-traffic regimes for large-scale service systems.
\newblock {\em {SIAM} Journal of Applied Mathematics}, 75(2):787--812, 2015.

\bibitem{koolejongbloed}
G.~Jongbloed and G.~Koole.
\newblock Managing uncertainty in call centres using {P}oisson mixtures.
\newblock {\em Applied Stochastic Models in Business and Industry},
  17(4):307--318, March 2001.

\bibitem{Kim2015b}
S-H. Kim, P.~Vel, W.~Whitt, and W.C. Cha.
\newblock {P}oisson and non-{P}oisson properties in appointment-generated
  arrival processes: The case of an endocrinology clinic.
\newblock {\em Operations Research Letters}, 43(3):247--253, 2015.

\bibitem{kimwhitt}
S-H. Kim and W.~Whitt.
\newblock Are call center and hospital arrivals well modeled by nonhomogeneous
  {P}oisson processes?
\newblock {\em Manufacturing \& Service Operations Management}, 16(3):464--480,
  2014.

\bibitem{Kim2015a}
S-H. Kim, W.~Whitt, and W.C. Cha.
\newblock A data-driven model of an appointment-generated arrival process at an
  outpatient clinic.
\newblock Unpublished manuscript, 2015.

\bibitem{Kingman1962}
J.F.C. Kingman.
\newblock On queues in heavy traffic.
\newblock {\em Journal of the Royal Statistical Society: Series B},
  24(2):383--392, 1962.

\bibitem{Kocaga2015}
Y.L. Ko\c{c}aga, M.~Armony, and A.R. Ward.
\newblock Staffing call centers with uncertain arrival rate and co-sourcing.
\newblock {\em Production and Operations Management}, 24(7):1101--1117, 2015.

\bibitem{Latouche1999}
G.~Latouche and V.~Ramaswami.
\newblock {\em Introduction to Matrix Analytic Methods in Stochastic Modeling}.
\newblock Statistics \& Applied Probability Series. SIAM, 1999.

\bibitem{maman}
S.~Maman.
\newblock Uncertainty in the demand for service: The case of call centers and
  emergency departments.
\newblock Master's thesis, Technion -- Israel Institute of Technology, 2009.

\bibitem{Mehrotra2010}
V.~Mehrotra, O.~Ozl\"uk, and R.~Saltzmann.
\newblock Intelligent procedures for intra-day updating of call center agent
  schedules.
\newblock {\em Production and operations management}, 19(3):353--367, 2010.

\bibitem{Nagaev1979}
S.V. Nagaev.
\newblock Large deviations of sums of independent random variables.
\newblock {\em Annals of Probability}, 7(5):745--789, 1979.

\bibitem{Neuts1981}
M.F. {Neuts}.
\newblock {\em Matrix-Geometric Solutions in Stochastic Models}.
\newblock The John Hopkins University Press, 1981.

\bibitem{Robbins2010}
T.R. Robbins, D.J. Medeiros, and T.P. Harrison.
\newblock Does the {E}rlang {C} model fit in real call centers?
\newblock In {\em Proceedings of the 2010 Winter Simulation Conference}, 2010.

\bibitem{Sigman2011a}
K.~Sigman and W.~Whitt.
\newblock Heavy-traffic limits for nearly deterministic queues.
\newblock {\em Journal of Applied Probability}, 48(3):657--678, 2011.

\bibitem{Sigman2011b}
K.~Sigman and W.~Whitt.
\newblock Heavy-traffic limits for nearly deterministic queues: sstationary
  distributions.
\newblock {\em Queueing Systems}, 69:145--173, July 2011.

\bibitem{Steckley2009}
S.G. Steckley, S.G. Henderson, and V.~Mehrotra.
\newblock Forecast errors in service systems.
\newblock {\em Probability in the Engineering and Informational Sciences},
  23(2):305--332, 2009.

\bibitem{Whitt1999}
W.~Whitt.
\newblock Dynamic staffing in a telephone call center aiming to immediately
  answer all calls.
\newblock {\em Operations Research Letters}, 24(5):205--212, 1999.

\bibitem{Whitt2006}
W.~Whitt.
\newblock Staffing a call center with uncertain arrival rate and absenteeism.
\newblock {\em Production and Operations Management}, 15(1):88--102, 2006.

\bibitem{Zan2012}
J.~Zan.
\newblock {\em Staffing service centers under arrival-rate uncertainty}.
\newblock PhD thesis, University of Texas, 2012.

\end{thebibliography}

\section{Proofs of convergence results}
 \label{formalSec}

This section presents the details of the proof of Lemma \ref{gaussStep} and Theorem \ref{gaussianThm}, using the random walk perspective of the process $\{Q_{k,n}\}_{k=0}^\infty$. This section is structured as follows. The next two lemmata are necessary for proving the first assertion of Theorem \ref{gaussianThm}, concerning the weak convergence of the scaled process to the maximum of the Gaussian random walk, which is summarized in Proposition \ref{prop6}. The two remaining propositions of this section show convergence of $\hat{Q}_{n}$ at the process level as well as in terms of the three characteristics.

Let us first fix some notation:
\begin{equation}
\label{b1}
Y_{k,n} := \hat{A}_{k,n}-\beta,\quad
S_{k,n} = \sum_{i=1}^k Y_{i,n},
\end{equation}
with $S_{0,n} = 0$ and $k=1,2,...$. Then \eqref{mm6} can be rewritten as
\begin{equation}
\label{g5a}
 \hat{Q}_{n} \equalD \max_{0\leq k} \Bigl\{ \sum_{i=1}^k Y_{i,n}\Bigr\} =: M_{\beta,n},
\end{equation}
Last, we introduce the sequence of independent normal random variables $Z_1,Z_2,\ldots$ with mean $\-\beta$ and unit variance 1, and
\begin{equation*}
M_\beta \equalD \max_{k\geq 0} \{\sum_{i=1}^k Z_i\}
\end{equation*}

\subsection{Proof of Lemma \ref{gaussStep}}
\begin{proof}
We show weak convergence of the random variable $\hat{A}_{n}$, as defined in \eqref{b1}, to a standard normal random variable. Since $\hat{\Lambda}_n$ is asymptotically standard normal, its characteristic function converges pointwise to the corresponding limiting characteristic function, i.e.
\begin{equation}
 \label{g8}
 \lim_{n\rightarrow\infty} \phi_{\hat{\Lambda}_n}(t) = \lim_{n\rightarrow \infty} \ee^{-i\mu_nt/\sigma_n}\,\phi_{\Lambda_n}(t/\sigma_n) = \ee^{{-}t^2/2},\qquad \forall t\in \mathbb{R}.
 \end{equation}
Furthermore, by definition of $A_{n}$,
\begin{equation*}
\label{g9}
\phi_{A_{n}}(t) = \E\left[ \exp(\Lambda_n(\ee^{it}-1))\right] = \phi_{\Lambda_n}\left(-i(\ee^{it}-1)\right),
\end{equation*}
so that
\begin{equation}
\label{g10}
\phi_{\hat{A}_{k,n}}(t) = \ee^{-i\mu_nt/\sigma_n}\,\phi_{A_{k,n}}(t/\sigma_n) = \ee^{-i\mu_nt/\sigma_n}\phi_{\Lambda_n}\left(-i(\ee^{it/\sigma_n}-1)\right).
\end{equation}
Now fix $t\in\mathbb{R}$. By using
\begin{equation*}
\label{g11}
-i(\e^{it/\sigma_n} - 1) = \frac{t}{\sigma_n} -\frac{it^2}{2\sigma_n^2} + O\left(t^3/\sigma_n^3\right),
\end{equation*}
we expand the last term in \eqref{g10},
\begin{equation*}
\label{g12}
\phi_{\Lambda_n}(t/\sigma_n) + \Bigl(-\frac{i\,t^2}{2\sigma_n^2}+O\left(t^3/\sigma_n^3\right)\Bigr)
\phi_{\Lambda_n}'(t/\sigma_n) + O\Bigl(\Bigl(-\frac{i\,t^2}{2\sigma_n^2}+O\left(\frac{t^3}{\sigma_n^3}\right)\Bigr)^2\phi_{\Lambda_n}''\Big(\frac{t}{\sigma_n}\Big)\Bigr)
\end{equation*}
\begin{equation*}
\label{g13}
= \phi_{\Lambda_n}(t/\sigma_n) - \Bigl(\frac{i\,t^2}{2\sigma_n^2}+O\left(t^3/\sigma_n^3\right)\Bigr)
\phi_{\Lambda_n}'(\zeta)
\end{equation*}
for some $\zeta$ such that $|\zeta - t/\sigma_n| < |i(1-\ee^{it/\sigma_n})-t/\sigma_n|$. Also,
\begin{align}
|\phi_{\Lambda_n}'(u)| &= \left|\frac{\d}{\dd u}\int_{-\infty}^\infty \ee^{iux}\dd F_{\Lambda_n}(x)\right| = \left|\int_{0}^{\infty} ix\,\ee^{iux}\dd F_{\Lambda_n}(x)\right| \nonumber\\
\label{g13a}
&\leq \int_{-\infty}^\infty |ix\,\ee^{iux}|\,\dd F_{\Lambda_n}(x) = \int_0^\infty x\dd F_{\Lambda_n}(x) = \mu_n
\end{align}
for all $u\in\mathbb{R}$. Hence, by substituting \eqref{g10},
\begin{align}
\left| \phi_{\hat{A}_{k,n}}(t)-\ee^{-i\mu_nt/\sigma_n}\phi_{\Lambda_n}(t/\sigma_n)\right| &= \left|\ee^{-i\mu_nt/\sigma_n}\,\left(\frac{i\,t^2}{2\sigma_n^2}+O(t^3/\sigma_n^3)\right)\,\phi_{\Lambda_n}'(\zeta)\right|\nonumber\\
& \leq \left(\frac{t^2}{2\sigma_n^2}+O(t^3/\sigma_n^3)\right) |\phi_{\Lambda_n}'(\zeta)|\nonumber\\
& = \frac{\mu_n t^2}{\sigma_n^2} + O\left(\frac{\mu_nt^3}{\sigma_n^3}\right),
\label{g13b}
\end{align}
which tends to zero as $n\rightarrow \infty$ by our assumption that $\mu_n/\sigma_n^2\rightarrow 0$.
Finally,
\begin{equation*}
\label{g13c}
\left| \phi_{\hat{A}_{k,n}}(t)-\ee^{-\tfrac12 t^2}\right| \leq \left| \phi_{\hat{A}_{k,n}}(t)-\ee^{-i\mu_nt/\sigma_n}\phi_{\Lambda_n}(t/\sigma_n)\right| +
\left| \ee^{-i\mu_nt/\sigma_n}\phi_{\Lambda_n}(t/\sigma_n) - \ee^{-\tfrac12 t^2}\right|,
\end{equation*}
in which both terms go to zero for $n\rightarrow \infty$, by \eqref{g8} and \eqref{g13b}. Hence $\phi_{\hat{A}_{k,n}}(t)$ converges to $\ee^{{-}t^2/2}$ for all $t\in\mathbb{R}$, so that we can conclude by L\'evy's continuity theorem that $\hat{A}_{k,n} \Rightarrowd N(0,1)$.
\end{proof}
\subsection{Proof of Theorem \ref{gaussianThm}}
To secure convergence in distribution of $\hat{Q}_{n}$ to $M_\beta$, i.e. the maximum of a Gaussian random walk with negative drift, the first assertion of Theorem \ref{gaussianThm}.
 the following property of the sequence $\{Y_{k,n}\}_{n\in\mathbb{N}}$ needs to hold.
\begin{lemma}\label{uilemma}
Let $Y_{k,n}$ be defined as in \eqref{b1} with $\mu_n,\sigma_n^2 < \infty$ for all $n\in\mathbb{N}$. Then the sequence $\{(Y_{k,n})^+\}_{n\in\mathbb{N}}$ is uniform integrable, i.e.
\begin{equation*}
\label{g14}
\lim_{K\rightarrow\infty}\sup_n \E\Big[Y_{k,n}^+ |\mathbbm{1}_{\{|Y^{+}_{k,n}|\geq K\}}\Big] = 0.
\end{equation*}
\end{lemma}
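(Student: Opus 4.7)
The plan is to invoke the standard second-moment sufficient criterion for uniform integrability: a family $\{X_n\}$ is uniformly integrable whenever $\sup_n \E[X_n^2] < \infty$. This reduces the lemma to a one-line second-moment computation that is essentially built into the scaling defining $\hat A_{k,n}$.

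First I would observe that by construction $\E[\hat A_{k,n}] = 0$ and $\Var \hat A_{k,n} = 1$, so
\begin{equation*}
\E[Y_{k,n}^2] = \E[(\hat A_{k,n}-\beta)^2] = \Var \hat A_{k,n} + \beta^2 = 1+\beta^2,
\end{equation*}
uniformly in $n$. In particular $\sup_n \E[(Y_{k,n}^+)^2] \leq 1+\beta^2 < \infty$.

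Second, for any $K>0$, on the event $\{Y_{k,n}^+ \geq K\}$ one has $Y_{k,n}^+ \leq (Y_{k,n}^+)^2/K$, so
\begin{equation*}
\E\big[Y_{k,n}^+ \mathbbm{1}_{\{Y_{k,n}^+ \geq K\}}\big] \leq \frac{1}{K}\,\E[(Y_{k,n}^+)^2] \leq \frac{1+\beta^2}{K}.
\end{equation*}
Taking the supremum over $n$ and letting $K\to\infty$ yields the claim.

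There is no real obstacle here; the only subtle point worth flagging explicitly in the proof is that the uniform bound on the second moment is not an extra assumption but follows directly from the choice of normalization $\hat A_{k,n}=(A_{k,n}-\mu_n)/\sigma_n$, which forces $\Var \hat A_{k,n} = 1$ for every $n$ as soon as $\sigma_n^2 = \Var A_n$ is finite (which is guaranteed under Assumption \ref{as3} or, for the Gamma--Poisson case, by \eqref{t21}). No use of overdispersion or of Assumption \ref{as1}(b) is needed for this lemma; only finiteness of $\mu_n$ and $\sigma_n^2$ is used.
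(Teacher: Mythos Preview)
Your proof is correct and rests on the same key observation as the paper's: the normalization forces $\E[Y_{k,n}^2]=1+\beta^2$ uniformly in $n$, and a uniform second-moment bound implies uniform integrability. The only difference is in execution: the paper bounds $\E[Y_n\mathbbm{1}_{\{Y_n\geq K\}}]$ via Cauchy--Schwarz together with Chebyshev's inequality on $\P(Y_n\geq K)$, obtaining $\sqrt{1+\beta^2}/(K+\beta)$, whereas you use the more direct pointwise inequality $Y^+\leq (Y^+)^2/K$ on $\{Y^+\geq K\}$ to get $(1+\beta^2)/K$. Your route is slightly cleaner; both give a $1/K$ decay and are otherwise equivalent.
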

\begin{proof}
Because the sequence $\{Y_{k,n}\}_{k\in\mathbb{N}}$ is i.i.d. for all $n$, we  omit the index $k$ in this proof. First, fix $K>0$ and note that
\begin{equation*}
\label{g15}
\E[|Y^{+}_n|\mathbbm{1}{\{|Y^{+}_n|\geq K\}}] = \E[Y^{+}_n\mathbbm{1}{\{Y^{+}_n\geq K\}}] = \E[Y_{n}\mathbbm{1}_{\{Y_{n}\geq K\}}].
\end{equation*}
This last expression can be bounded from above using the Cauchy-Schwarz inequality, so that
\begin{equation*}
\label{g16}
\E[Y_{n}\mathbbm{1}_{\{Y_{n}\geq K\}}] \leq \E[ Y^2_n]^{1/2}\,\mathbb{P}(Y_{n}\geq K)^{1/2}.
\end{equation*}
By the definition of $Y_{n}$, we know $\E [Y_{n}] = -\beta$ and $\Var Y_{n} = \Var A_{n} / \sigma_n^2 = 1$. Using this information, we find
\begin{equation*}
   \label{g17}
   \E[Y_n^2] = \Var Y_{n} + (\E[Y_{n}])^2 = 1+\beta^2
   \end{equation*}
   and
   \begin{align*}
   \mathbb{P}(Y_{n}\geq K )&=\mathbb{P}(Y_{n}+\beta\geq K+\beta) \leq \mathbb{P}(|Y_{n}+\beta|\geq K+\beta)\nonumber\\
   &\leq \frac{\Var Y_{n}}{(K+\beta)^2} = \frac{1}{(K+\beta)^2},
   \end{align*}
   where we used Chebyshev's inequality for the last upper bound. Therefore,
 \begin{align*}
\lim_{K\rightarrow \infty} \sup_n \E[|Y_n^{+}|\mathbbm{1}_{\{|Y_n^{+}|\geq K\}}] &=
\lim_{K\rightarrow \infty} \sup_n \E[Y_{n}\mathbbm{1}_{\{Y_{n}\geq K\}}]\nonumber\\
&\leq \lim_{K\rightarrow \infty} \sup_n \E[Y_n^2]^{1/2}\,\mathbb{P}(Y_{n}\geq K )^{1/2}\nonumber\\
&\leq \lim_{K\rightarrow \infty} \frac{\sqrt{1+\beta^2}}{K+\beta} = 0.
\end{align*}

\end{proof}
By combining the properties proved in Lemma \ref{gaussStep} and \ref{uilemma} with Assumption \ref{as2}, the next result follows directly by \cite[Thm.~X6.1]{Asmussen2003}.
\begin{proposition}\label{maxRWprop}
Let $\hat{Q}_{n}$ as in \eqref{g5a}. Then
\begin{equation*}
 \hat{Q}_{n}\Rightarrowd M_\beta,\qquad {\rm as}\ n\rightarrow\infty.
\end{equation*}
\end{proposition}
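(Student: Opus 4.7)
The plan is to read this as a direct application of the standard weak convergence theorem for suprema of random walks, namely \cite[Thm.~X6.1]{Asmussen2003}. That theorem applies to a triangular array of i.i.d.\ random walk increments $\{Y_{k,n}\}_{k\ge 1}$ whose one-step marginal converges weakly to a limit variable $Z$ with $\mathbb{E} Z<0$, under the additional assumption that $\{Y_{k,n}^+\}_n$ is uniformly integrable. The conclusion is that the all-time maximum $M_{\beta,n}$ of the $n$-th walk converges weakly to the all-time maximum of the random walk with step distribution $Z$. Everything needed to invoke this theorem has been assembled in the preceding lemmas, so the proof is essentially a checklist.

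First I would identify the candidate limit for the increments. By Lemma \ref{gaussStep} we have $\hat{A}_{k,n}\Rightarrowd N(0,1)$, so the continuous mapping theorem (shift by $-\beta$) gives $Y_{k,n}=\hat{A}_{k,n}-\beta\Rightarrowd Z\sim N(-\beta,1)$. Since $\beta>0$ by Assumption \ref{as2}(b), the limiting walk has strictly negative drift and thus its supremum $M_\beta$ is a proper (almost surely finite) random variable. Next, Lemma \ref{uilemma} provides precisely the uniform integrability of $\{Y_{k,n}^+\}_n$ required by the theorem, and upgrading weak convergence with this uniform integrability also ensures $\mathbb{E} Y_{k,n}\to -\beta<0$, so the negative-drift hypothesis is transferred to the pre-limit walks for all sufficiently large $n$.

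With the weak limit of the step distribution identified, the negative-drift condition verified, and uniform integrability of the positive parts in hand, \cite[Thm.~X6.1]{Asmussen2003} applies and delivers $M_{\beta,n}\Rightarrowd M_\beta$. Combining this with the distributional identity $\hat{Q}_n\equalD M_{\beta,n}$ from \eqref{g5a} finishes the proof. There is no genuine obstacle here; the only care needed is in matching the hypotheses of the Asmussen theorem to the objects defined in \eqref{b1}. The substantive steps, namely the Gaussian limit of the individual increments and the uniform integrability of their positive parts, have already been carried out in Lemma \ref{gaussStep} and Lemma \ref{uilemma}, respectively.
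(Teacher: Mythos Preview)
Your proposal is correct and matches the paper's approach exactly: the paper states that the result follows directly from \cite[Thm.~X6.1]{Asmussen2003} by combining Lemma~\ref{gaussStep}, Lemma~\ref{uilemma}, and Assumption~\ref{as2}. Your checklist of hypotheses (weak convergence of the increment to $N(-\beta,1)$, negative drift, uniform integrability of the positive parts) is precisely what that citation requires.
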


Although Proposition \ref{maxRWprop} tells us that the properly scaled $Q_{n}$ converges to a non-degenerate limiting random variable, it does not cover the convergence of its mean, variance and the empty-queue probability. In order to secure convergence of these performance measures as well, we follow the approach similar \cite{Sigman2011b}, using Assumptions \ref{as2} and \ref{as3}.

\begin{proposition}\label{prop6}
Let $\hat{Q}_{n}$ as in \eqref{g5a}, $\mu_n,\sigma_n^2 \rightarrow \infty$ such that both $\sigma_n^2/\mu_n\rightarrow \infty$ and $\E[\hat{A}_n^3]$ $<\infty$. Then
\begin{align*}
\label{b16}
\mathbb{P}(\hat{Q}_{n}= 0)&\rightarrow \mathbb{P}(M_\beta = 0),\\
\E [\hat{Q}_{n}]&\rightarrow \E [M_\beta],\\
\Var \hat{Q}_{n}&\rightarrow \Var M_\beta,
\end{align*}
as $n\rightarrow\infty$.
\end{proposition}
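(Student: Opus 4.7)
The plan is to argue from the Spitzer-type series representations already collected in \eqref{t4}, \eqref{t4a}, \eqref{t5}, combined with the weak convergence of $\hat Q_n$ furnished by Proposition \ref{maxRWprop}. Writing $\tilde{S}_k = \sum_{i=1}^k Z_i$ with $Z_i \sim N(-\beta,1)$ i.i.d., the limit maximum satisfies the analogous identities
\begin{equation*}
\E M_\beta = \sum_{k\geq 1}\tfrac{1}{k}\E[\tilde S_k^+], \qquad \Var M_\beta + \E M_\beta = \sum_{k\geq 1}\tfrac{1}{k}\E[(\tilde S_k^+)^2], \qquad -\ln\P(M_\beta = 0) = \sum_{k\geq 1}\tfrac{1}{k}\P(\tilde S_k > 0).
\end{equation*}
Dividing \eqref{t4}--\eqref{t5} by the appropriate power of $\sigma_n$ gives identical identities for $\hat Q_n$ in terms of $S_{k,n} = \sum_{i=1}^k Y_{i,n}$. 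So all three convergence statements reduce to transferring the limit inside the respective Spitzer series.

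First, I would verify \emph{termwise} convergence. For each fixed $k$, Lemma \ref{gaussStep} and the continuous mapping theorem give $S_{k,n} \Rightarrow \tilde S_k$, and since $\E[S_{k,n}]=-k\beta$ and $\Var S_{k,n}=k$ are independent of $n$, the family $\{S_{k,n}^+\}_n$ is uniformly $L^2$-bounded and hence uniformly integrable, yielding $\E[S_{k,n}^+]\to\E[\tilde S_k^+]$. The finiteness of $\E[\hat A_n^3]$ (inherited from $\E[\Lambda_n^3]<\infty$ via the mixed-Poisson identity) upgrades this to uniform integrability of $(S_{k,n}^+)^2$ and hence $\E[(S_{k,n}^+)^2]\to \E[(\tilde S_k^+)^2]$. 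Since the marginal $\tilde S_k\sim N(-k\beta,k)$ is continuous at $0$, also $\P(S_{k,n}>0)\to\P(\tilde S_k>0)$.

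Second, I would establish a \emph{summable-in-$k$, uniform-in-$n$} dominator so that dominated convergence applies to each of the three series. The key tool is a Cramér--Chernoff bound: using the mixed-Poisson structure
\begin{equation*}
\E\bigl[\ee^{tY_{1,n}}\bigr] = \ee^{-\beta t - t\mu_n/\sigma_n}\,M^\Lambda_n\!\left(\ee^{t/\sigma_n}-1\right),
\end{equation*}
a Taylor expansion analogous to \eqref{g13d} shows that the right-hand side converges to $\exp(-\beta t + \tfrac12 t^2)$ locally uniformly in $t$. Hence there exist $\theta>0$ and $c>0$ such that $\E[\ee^{\theta Y_{1,n}}]\leq \ee^{-c}$ for all sufficiently large $n$, giving $\P(S_{k,n}>0)\leq \ee^{-ck}$ uniformly in $n$. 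Cauchy--Schwarz combined with the variance identity then yields $\E[(S_{k,n}^+)^m] \leq \bigl(k(1+k\beta^2)\bigr)^{m/2}\,\ee^{-ck/2}$ for $m=1,2$, which is exponentially decaying and therefore summable against $1/k$; the excluded finitely many small $n$ are handled via the existing moment estimates implied by Assumption \ref{as3}.

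Third, dominated convergence on each Spitzer series combines the termwise limits of Step~1 with the uniform exponential dominator of Step~2, yielding $\E\hat Q_n\to \E M_\beta$, $\E[\hat Q_n^2]\to\E[M_\beta^2]$, and $\ln\P(\hat Q_n=0)\to\ln\P(M_\beta=0)$. Subtracting the square of the first from the second gives $\Var \hat Q_n \to \Var M_\beta$. The main obstacle is Step~2: the pointwise convergence of the increment MGFs must be upgraded to a \emph{uniform} neighbourhood bound on $\E[\ee^{\theta Y_{1,n}}]$, which is where the mixed-Poisson structure of $A_n$ and the hypothesis $\E[\Lambda_n^3]<\infty$ are used in an essential way; once this uniform Chernoff estimate is in place, the remaining steps are standard.
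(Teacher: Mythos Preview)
Your overall architecture---Spitzer series plus termwise convergence plus a dominated-convergence argument---matches the paper's. The termwise part (your Step 1) is fine and essentially what the paper also relies on. The substantive difference, and the place where your proposal has a real gap, is Step 2.

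You propose to dominate the Spitzer series via a uniform Chernoff bound, i.e.\ to find $\theta>0$ and $c>0$ with $\E[\ee^{\theta Y_{1,n}}]\leq \ee^{-c}$ for all large $n$, arguing by ``a Taylor expansion analogous to \eqref{g13d}''. But \eqref{g13d} is a computation for the \emph{characteristic} function, and the key step there is the bound $|\varphi_{\Lambda_n}'(u)|\leq \mu_n$ in \eqref{g13a}, which holds precisely because $|e^{iux}|=1$. For the moment generating function the analogous derivative is $\E[\Lambda_n\,\ee^{u\Lambda_n}]$, which is not controlled by $\mu_n$; the error term in your Taylor expansion of $M_n^\Lambda(\ee^{t/\sigma_n}-1)$ around $M_n^\Lambda(t/\sigma_n)$ can therefore blow up. More fundamentally, the hypotheses of the proposition give only weak convergence of $\hat\Lambda_n$ and a third-moment bound on $\hat A_n$; neither yields convergence of $\E[\ee^{\theta\hat A_n}]$, nor even a uniform-in-$n$ bound on it. (Assumption~\ref{as3} guarantees the mgf exists for each $n$, but its radius may shrink---in the Gamma case it is $1+1/b_n\to 1$---so you cannot simply freeze $\theta$ across $n$ without further argument.) In the specific Gamma--Poisson case an explicit computation does show mgf convergence, so your route could be pushed through there; under the stated hypotheses of Proposition~\ref{prop6} it does not.

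The paper avoids exponential moments entirely. For $\P(\hat Q_n=0)$ it uses Chebyshev: $\P(S_{k,n}>0)\leq 1/(\beta^2 k)$, giving the summable dominator $1/(\beta^2 k^2)$. For $\E\hat Q_n$ and $\Var\hat Q_n$ it writes $\E[(S_{k,n}^+)^m]$ as an integral of the tail, splits the integration at $x=k$, and controls each piece with the Fuk--Nagaev inequality
\[
\P\Bigl(\sum_{i=1}^k \hat A_{i,n}>y\Bigr)\leq C_r\Bigl(\frac{k}{y^2}\Bigr)^{r}+k\,\P(\hat A_{1,n}>y/r),
\]
which requires only $\E[\hat A_n^2]=1$ for the mean and $\E[|\hat A_n|^3]<\infty$ for the variance. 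This produces a $k$-summable, $n$-uniform bound without any exponential-moment assumption, and is the reason the proof goes through at the stated level of generality.

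A minor side remark: your identity $\Var M_\beta+\E M_\beta=\sum_k k^{-1}\E[(\tilde S_k^+)^2]$ is off; the Spitzer cumulant computation (cf.\ \eqref{t4a}) gives $\Var M_\beta=\sum_k k^{-1}\E[(\tilde S_k^+)^2]$ directly, so no subtraction of $\E M_\beta$ is needed.
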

\proof
First, we recall that $\hat{Q}_{n}\equalD M_{\beta,n}$ for all $n\in\mathbb{N}$, so that $\mathbb{P}(\hat{Q}_{n} = 0) = \mathbb{P}(M_{\beta,n}=0)$, $\E[\hat{Q}_{n}]=\E[M_{\beta,n}]$ and $\Var\,\hat{Q}_{n}=\Var\,M_{\beta,n}$ as defined in \eqref{b1}. Our starting point is Spitzer's identity, see \cite[p.~230]{Asmussen2003},
\begin{equation}
\label{b17}
\E[\ee^{it M_{\beta,n}}] = \exp\Bigl( \sum_{k=1}^\infty \frac{1}{k} (\E[\ee^{itS_{k,n}^+}]-1)\Bigr),
\end{equation}
with $S_{k,n}$ as in \eqref{b1}, and $M_{\beta,n}$ the all-time maximum of the associated random walk. Simple manipulations of \eqref{b17} give
\begin{align}
\label{y1}
{\rm ln}\,\mathbb{P}(M_{\beta,n} = 0) &=  -\sum_{k=1}^\infty \frac{1}{k}\,\mathbb{P}(S_{k,n} > 0),\\
\label{y2}
\E[M_{\beta,n}] &= \sum_{k=1}^\infty \frac{1}{k} \E[S^+_{k,n}] = \sum_{k=1}^\infty \frac{1}{k}\int_0^\infty \mathbb{P}(S_{k,n} > x) \dd x,\\
\label{y3}
\Var M_{\beta,n} &= \sum_{k=1}^\infty \frac{1}{k} \E[(S^{+}_{k,n})^2] =\sum_{k=1}^\infty \frac{1}{k}\int_0^\infty \mathbb{P}(S_{k,n} > \sqrt{x}) \dd x.
\end{align}
By Lemma \ref{gaussStep}, we know
\begin{equation*}
\label{y4}
\mathbb{P}(S_{k,n} > y) = \mathbb{P}\left( {\sum_{i=1}^k} Y_{i,n} > y \right) \rightarrow \mathbb{P}\left(\sum_{i=1}^k Z_i> y\right),
\end{equation*}
for $n\rightarrow \infty$, where the $Z_i$'s are independent and identically normally distributed with mean $-\beta$ and variance 1.
Because equivalent expressions to \eqref{y1}-\eqref{y3} apply to the limiting Gaussian random walk, it is sufficient to show that the sums converge uniformly in $n$, so that we can apply dominated convergence to prove the result.

We start with the empty-queue probability. To justify interchangeability of the infinite sum and limit, note
\begin{equation*}
\label{y5}
 \mathbb{P}(S_{k,n} > 0) \leq  \mathbb{P}(|S_{k,n}+k\beta| > k\beta )\leq \frac{k}{\beta^2k^2} = \frac{1}{\beta^2k},
 \end{equation*}
where we used that $\E[ S_{k,n}] = k\E [Y_{1,n}] = -k\beta$ and $\Var S_{k,n} = k$ and apply Chebychev's inequality, so that
\begin{equation*}
\label{y6}
\sum_{k=1}^\infty \frac{1}{k}\mathbb{P}(S_{k,n} > 0) \leq \sum_{k=1}^\infty \frac{1}{\beta^2 k^2} < \infty, \qquad \forall n\in\mathbb{N}.
\end{equation*}
Hence,
\begin{align*}
\lim_{n\rightarrow\infty} {\rm ln}\,\mathbb{P}(\hat{Q}_{n}= 0) &= \lim_{n\rightarrow\infty}  - \sum_{k=1}^\infty \frac{1}{k}\mathbb{P}(S_{k,n} > 0) = -\sum_{k=1}^\infty \frac{1}{k} \lim_{n\rightarrow\infty}\mathbb{P}(S_{k,n} > 0)\nonumber\\
&= -\sum_{k=1}^\infty \frac{1}{k} \mathbb{P}({\sum_{i=1}^k} Z_i > 0) = {\rm ln}\,\mathbb{P}(M_\beta = 0),
\end{align*}
Finding a suitable upper bound on $\frac{1}{k}\int_0^\infty \mathbb{P}(\hat{Q}_{n}>x) dx$ and $\frac{1}{k}\int_0^\infty \mathbb{P}(\hat{Q}_{n}>\sqrt{x}) \dd x$ requires a bit more work. We initially focus on the former, the latter follows easily. The following inequality from \cite{Nagaev1979} proves to be very useful:
\begin{equation}
\label{y8}
\mathbb{P}(\bar{S}_k>y) \leq C_r\,\Bigl(\frac{k\,\sigma^2}{y^2}\Bigr)^2 + k\,\mathbb{P}(X>y/r),
\end{equation}
where $\bar{S}_k$ is the sum of $k$ i.i.d. random variables distributed as $X$, with $\E[X] = 0$ and $\Var\, X=\sigma^2$, $y > 0$, $r>0$ and $C_r$ a constant only depending on $r$. We take $r=3$ for brevity in the remainder of the proof, although any $r>2$ will suffice. We  analyze the integral in two parts, one for the interval $(0,k)$ and one for $[k,\infty)$. For the first part, we have
\begin{align}
\label{y9}
\int_0^k\mathbb{P}(S_{k,n}>x) \dd x &=\int_0^k \mathbb{P}({\sum_{i=1}^\infty}\hat{A}_{i,n} > x+k\beta)\dd x\, \leq\, \int_0^k \mathbb{P}({\sum_{i=1}^\infty}\hat{A}_{i,n} > k\beta)\dd x  \nonumber\\
&= k\,\mathbb{P}({\sum_{i=1}^k }\hat{A}_{i,n} >  k\beta) \,\leq\, \frac{C_3}{k^2\beta^6} + k^2\mathbb{P}(\hat{A}_{1,n}> \tfrac{1}{3}k),
\end{align}
where we used \eqref{y8} in the last inequality.
Hence,
\begin{align}
\label{y10}
\sum_{k=1}^\infty\frac{1}{k}\, \int_0^k \mathbb{P}(S_{k,n}>x)\dd x &\leq \, \frac{C_3}{\beta^6}\sum_{k=1}^\infty k^{-3} +\sum_{k=1}^\infty k\,\mathbb{P}(\hat{A}_{1,n}>\tfrac{1}{3}k) \nonumber \\
&\leq C_1^*+\sum_{k=1}^\infty k\,\mathbb{P}(\hat{A}_{1,n}>\tfrac{1}{3}k).
\end{align}
With the help of the inequality (see \cite{Sigman2011b}),
\begin{equation}
\label{y11}
(b-a)a\,\mathbb{P}(X>b) \leq \int_a^b x\,\mathbb{P}(X>x) \dd x \qquad \forall 0<a<b,
\end{equation}
we get by taking $a=(k-1)/3$ and $b=k/3$,
\begin{align}
\label{y12}
k\,\mathbb{P}(\hat{A}_{1,n}>\tfrac{1}{3}k) &\leq \frac{9\,k}{k-1}\int_{(k-1)/3}^{k/3} x\,\mathbb{P}(\hat{A}_{1,n}>x) \dd x \nonumber \\
&\leq 18\int_{(k-1)/3}^{k/3} x\,\mathbb{P}(\hat{A}_{1,n}>x) \dd x,
\end{align}
for $k\geq 2$. Since the tail probability for $k=1$ is obviously bounded by 1, this yields
\begin{align}
\label{y13}
\sum_{k=1}^\infty k\,\mathbb{P}(\hat{A}_{1,n}>\tfrac{1}{3}k) &\leq 1+18\sum_{k=2}^\infty\int_{(k-1)/3}^{k/3} x\,\mathbb{P}(\hat{A}_{1,n}>x) \dd x\nonumber\\
&\leq 1+ \int_{0}^{\infty} x\,\mathbb{P}(\hat{A}_{1,n}>x)\dd x  \leq 1+\E[\hat{A}_{1,n}^2] < \infty,
\end{align}
since $\hat{A}_{1,n}$ has finite variance by assumption. This completes the integral over the first interval. For the second part, we use \eqref{y8} again to find
\begin{align}
\label{y14}
\int_k^\infty \mathbb{P}(S_{k,n}>x)dx &=\int_k^\infty \mathbb{P}({ \sum_{i=1}^\infty}\hat{A}_{i,n} > x+k\beta)dx \leq \int_k^\infty \mathbb{P}({\sum_{i=1}^\infty}\hat{A}_{i,n} > x)\dd x\nonumber \\
&\leq C_3\int_k^\infty \frac{k^2}{x^6} dx + k\int_k^\infty \mathbb{P}(\hat{A}_{i,n}>\tfrac{1}{3}x)\dd x\nonumber \\
&=  \frac{5 C_3}{k^3}+ k\int_k^\infty \mathbb{P}(\hat{A}_{i,n}>\tfrac{1}{3}x) \dd x.
\end{align}
So,
\begin{equation}
\label{y15}
\sum_{k=1}^\infty \frac{1}{k} \int_k^\infty \mathbb{P}(S_{k,n}>x)dx \leq C_2^* + \sum_{k=1}^\infty \int_k^\infty \mathbb{P}(\hat{A}_{i,n}>\tfrac{1}{3}x) \dd x,
\end{equation}
for some constant $C_2^*$. Last, we are able to upper bound the second term in \eqref{y15} by Tonelli's theorem:
\begin{align}
\sum_{k=1}^\infty \int_k^\infty \mathbb{P}(\hat{A}_{i,n}>\tfrac{1}{3}x) \dd x &\leq \int_1^\infty x\mathbb{P}(\hat{A}_{1,n}>\tfrac{1}{3}x) \dd x \nonumber\\
\label{y16}
&\leq 9\int_0^\infty y\mathbb{P}(\hat{A}_{1,n}>y) \dd y = 9\E[\hat{A}_{1,n}^2] < \infty.
\end{align}
Combining the results in \eqref{y10}, \eqref{y13}, \eqref{y15} and \eqref{y16}, we find
\begin{equation*}
\label{y17}
\sum_{k=1}^\infty \frac{1}{k} \int_0^\infty \mathbb{P}(S_{k,n} >x)\dd x < \infty,
\end{equation*}
and thus
\begin{align*}
\lim_{n\rightarrow\infty} \E[\hat{Q}_{n}] &= \lim_{n\rightarrow\infty}  \sum_{k=1}^\infty \frac{1}{k}\int_0^\infty \mathbb{P}(S_{k,n} > x)\dd x \nonumber\\
&= \sum_{k=1}^\infty \frac{1}{k} \int_0^\infty\mathbb{P}({\sum_{i=1}^k} Z_i > x)\dd x = \E [M_\beta].
\end{align*}
Finally, we show how the proof changes for the convergence of $\Var \hat{Q}_{n}$. The expressions for $\E [\hat{Q}_{n}]$ and $\Var \hat{Q}_{n}$ in \eqref{y1} and \eqref{y2} only differ in the term $\sqrt{x}$. Hence only minor modifications are needed to also prove convergence of the variance. Note that boundedness of the integral over the interval $(0,k)$ in \eqref{y9}-\eqref{y13} remains to hold when substituting $\sqrt{x}$ for $x$. \eqref{y14} changes into
\begin{align*}
\label{y18}
\int_k^\infty \mathbb{P}(S_{k,n}>\sqrt{x})\dd x &=\int_k^\infty \mathbb{P}({\sum_{i=1}^\infty}\hat{A}_{i,n} > \sqrt{x}+k\beta)\dd x \nonumber \\
&\leq C_3\int_k^\infty \frac{k^2}{(\sqrt{x}+k\beta)^6} dx + k\,\int_k^\infty \mathbb{P}(\hat{A}_{1,n}>\tfrac{1}{3}\sqrt{x}) \dd x \nonumber\\
&\leq \frac{C_4^*}{k}+ k\,\int_k^\infty \mathbb{P}(\hat{A}_{1,n}>\tfrac{1}{3}\sqrt{x}) \dd x,
\end{align*}
for some constant $C_4^*$, so that
\begin{equation*}
\sum_{k=1}^\infty \frac{1}{k} \int_k^\infty \mathbb{P}(S_{k,n}>\sqrt{x})\dd x \leq C_4^* + \sum_{k=1}^\infty \int_k^\infty \mathbb{P}(\hat{A}_{1,n}>\tfrac{1}{3}\sqrt{x}) \dd x.
\end{equation*}
Lastly, we have
\begin{align*}
\sum_{k=1}^\infty \int_k^\infty \mathbb{P}(\hat{A}_{1,n}>\tfrac{1}{3}\sqrt{x}) \dd x &\leq \int_1^\infty x\mathbb{P}(\hat{A}_{1,n}>\tfrac{1}{3}\sqrt{x}) \dd x \nonumber\\
\label{y17a}
&\leq 18\int_0^\infty y^2\mathbb{P}(\hat{A}_{1,n}>y) \dd y = 18\E[\hat{A}_{1,n}^3] < \infty.
\end{align*}
Therefore the sum describing the variance is also uniformly convergent in $n$, so that interchanging of infinite sum and limit is permitted and
\begin{align*}
\lim_{n\rightarrow\infty} \Var\,\hat{Q}_{n} &= \lim_{n\rightarrow\infty}  \sum_{k=1}^\infty \frac{1}{k}\int_0^\infty \mathbb{P}(S_{k,n} > \sqrt{x})\dd x \nonumber \\
&= \sum_{k=1}^\infty \frac{1}{k} \int_0^\infty\mathbb{P}\Big({\sum_{i=1}^k} Z_i > \sqrt{x}\Big)\dd x = \Var M_\beta.
\end{align*}

\section{Numerical procedures}\label{numprocs}
An alternative characterization of the stationary distribution is based on the analysis in \cite{Boudreau1962} and considers a factorization in terms of (complex) roots:
\begin{equation*}
\label{t9}
Q_{n}(w) = \frac{(s_n-\E [A_{n}])(w-1)}{w^{s_n}-\tilde{A}_{n}(w)}\,\prod_{k=1}^{s_n-1} \frac{w-z^n_k}{1-z^n_k},
\end{equation*}
where $z_1^n,z_2^n...,z_{s_n-1}^n$ are the $s_n-1$ zeros of $z^{s_n}-\tilde{A}_{n}(z)$, in $|z|<1$, yielding
\begin{equation*}
\label{c2}
\E Q_n  = \frac{\sigma_n^2}{2(s_n-\mu_n)}-\frac{s_n-1+\mu_n}{2} + \sum_{k-1}^{s_n-1} \frac{1}{1-z^n_k},
\end{equation*}
\begin{equation*}
\label{c3}
\mathbb{P}(Q_{n}=0) = \frac{s_n-\mu_A}{\tilde{A}_{n}(0)}\prod_{k=1}^{s-1}\frac{z^n_k}{z^n_k-1},
\end{equation*}
which for our choice of $\tilde{A}_{n}(z)$ becomes
\begin{equation*}
\label{c4}
\E Q_n = \frac{a_nb_n(b_n+1)}{2\beta\sqrt{a_n}b_n}-\frac{2a_nb_n+\beta\sqrt{a_nb_n(b_n+1)}-1}{2}+\sum_{k=1}^{s_n-1} \frac{1}{1-z^n_k},
\end{equation*}
\begin{equation*}
\label{c5}
\mathbb{P}(Q_{n}=0) = \beta \sqrt{a_nb_n(b_n+1)}(1+b_n)^{a_n}\prod_{k=1}^{s_n-1} \frac{z^n_k}{z^n_k-1}.
\end{equation*}
where $z_1,...,z_{s_n-1}$ denote the zeros of $z^{s_n} - \tilde{A}_{n}(z)$ in $|z|<1$. These zeros exist under the assumption $s_n > a_nb_n$; see \cite{rouche}. A robust numerical procedure to obtain these zeros is essential for a base of comparison. We  discuss two methods that fit these requirements. The first follows directly from \cite{Janssen2005}. \\
\begin{lemma}\label{fixedIterLemma}
Define the iteration scheme
\begin{equation}
\label{c6}
z_k^{n,l+1} = w^n_k [\tilde{A}_{n}(z_k^{n,l})]^{1/s_n},
\end{equation}
with $w^n_k = \ee^{2\pi ik/s_n}$ and $z_k^{n,0}=0$ for $k=0,1,\ldots,s_{n-1}$. Then $z_k^{n,l} \rightarrow z_k^n$ for all $k=0,1,...,s_n-1$ for $l\rightarrow \infty$.
\end{lemma}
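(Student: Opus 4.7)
The plan is to recognize the iteration as a Banach-style fixed-point recursion for the analytic map $T_k(z) := w_k^n [\tilde A_n(z)]^{1/s_n}$, with the principal branch of the $s_n$-th root chosen so that $[\tilde A_n(0)]^{1/s_n}>0$. The argument is the template of \cite{Janssen2005}, and it splits into three steps.

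Step 1: \emph{Identification of the fixed points.} Every zero of $z^{s_n}-\tilde A_n(z)$ in $|z|\leq 1$ satisfies $z=w\,[\tilde A_n(z)]^{1/s_n}$ for some $s_n$-th root of unity $w$. Matching $w=w_k^n=e^{2\pi i k/s_n}$ selects a unique interior zero $z_k^n$ for each $k\in\{1,\ldots,s_n-1\}$, with $k=0$ corresponding to the boundary root $z=1$ coming from the stability condition $\rho_n<1$. Hence each $z_k^n$ is the targeted fixed point of $T_k$.

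Step 2: \emph{Invariance of $\bar D(0,1)$.} Since $\tilde A_n$ is a probability generating function, the triangle inequality yields $|\tilde A_n(z)|\leq \tilde A_n(|z|)\leq 1$ on the closed unit disk. Consequently $|T_k(z)|\leq 1$, so starting at $z_k^{n,0}=0\in\bar D(0,1)$ every iterate stays in this complete metric space.

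Step 3: \emph{Contraction.} Differentiation gives
\begin{equation*}
T_k'(z) \;=\; \frac{w_k^n}{s_n}\,[\tilde A_n(z)]^{1/s_n-1}\,\tilde A_n'(z),
\end{equation*}
and one exploits the pgf bounds $|\tilde A_n'(z)|\leq \tilde A_n'(1)=\mu_n$ together with the fact that the Gamma-Poisson pgf in \eqref{r0} is zero-free on $\bar D(0,1)$. The cleanest way to extract a genuine strict contraction is to apply the Schwarz-Pick lemma to the analytic self-map $z\mapsto [\tilde A_n(z)]^{1/s_n}$ of the open unit disk, which contracts the Poincaré metric strictly unless the map is a hyperbolic isometry; the stability condition $\mu_n<s_n$ rules out the latter. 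Banach's fixed point theorem then delivers $z_k^{n,l}\to z_k^n$ as $l\to\infty$.

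The main obstacle is producing a uniform strict contraction: the naive Euclidean bound $|T_k'(z)|\leq \mu_n/[s_n\,\tilde A_n(0)^{1-1/s_n}]$ need not be below one, so one genuinely needs the hyperbolic (or Denjoy-Wolff) viewpoint, and one must confirm that the orbit starting at $0$ lands in the basin of $z_k^n$ rather than one of the other $s_n-1$ fixed points; the root-of-unity prefactor $w_k^n$ together with the principal-branch convention is precisely what pins this down, exactly as in \cite{Janssen2005}. A separate treatment is needed for the boundary index $k=0$: the fixed point $z=1$ lies on the unit circle, and the Banach framework is replaced by a monotone-convergence argument, since $\tilde A_n$ is nondecreasing on $[0,1]$ with $\tilde A_n(1)=1$, so the real positive iterates $z_0^{n,l}$ form an increasing sequence bounded by $1$ whose limit must satisfy $z=[\tilde A_n(z)]^{1/s_n}$, forcing $z=1$.
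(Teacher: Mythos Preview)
Your proposal takes a genuinely different route from the paper, and in doing so introduces an unnecessary complication and a real gap. The paper does \emph{not} pass to the hyperbolic metric: it exploits the explicit Gamma--Poisson pgf $\tilde A_n(z)=(1+(1-z)b_n)^{-a_n}$ to compute
\[
\Bigl|\tfrac{d}{dz}[\tilde A_n(z)]^{1/s_n}\Bigr|
=\frac{a_nb_n}{s_n}\,\bigl|1+(1-z)b_n\bigr|^{-a_n/s_n-1},
\]
and then uses the reverse triangle inequality $|1+(1-z)b_n|\ge 1+(1-|z|)b_n\ge 1$ on $|z|\le 1$ to get the uniform Euclidean Lipschitz constant $a_nb_n/s_n=\rho_n<1$. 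So the ``naive Euclidean bound'' you dismiss does work here --- your bound $\mu_n/[s_n\,\tilde A_n(0)^{1-1/s_n}]$ is simply too crude, because it separates $\tilde A_n'$ and $\tilde A_n^{1/s_n-1}$ rather than differentiating the closed form directly. Once the Euclidean contraction is in hand, the paper invokes \cite[Lemma~3.8]{Janssen2005}, which also handles the branch/basin matching you flagged.

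The gap in your argument is the appeal to Banach via Schwarz--Pick. Schwarz--Pick says an analytic self-map of the open disk is nonexpansive in the Poincar\'e metric and strictly distance-decreasing unless it is an automorphism, but it does \emph{not} produce a uniform contraction constant $<1$; the local contraction ratio can tend to $1$ near the boundary. Hence Banach's fixed-point theorem does not apply as stated. You would need the Denjoy--Wolff theorem instead, and then separately argue that the Denjoy--Wolff point is the intended $z_k^n$ (your root-of-unity/principal-branch remark is the right idea, but it is doing more work than you acknowledge). Your monotone treatment of $k=0$ is fine. In short: the strategy is salvageable, but the paper's direct Euclidean contraction is both simpler and complete.
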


\begin{proof}
The successive substitution scheme given in \eqref{c6} is the fixed point iteration scheme described in \cite{Janssen2005}, applied to the pgf of our arrival process. The authors show that, under the assumption of $\tilde{A}_{n}(z)$ being zero-free within $|z|\leq 1$, the zeros can be approximated arbitrarily closely, given that the function $[\tilde{A}_{n}(z)]^{1/s_n}$ is a contraction for $|z|\leq 1$, i.e.
\begin{equation*}
\label{c7}
\Bigl|\frac{\dd}{\dd z}[\tilde{A}_{n}(z)]^{1/s_n}\Bigr| < 1.
\end{equation*}
In our case,
\begin{align}
\label{c8}
\Bigl|\frac{\dd}{\dd z}[\tilde{A}_{n}(z)]^{1/s_n}\Bigr| = \Bigl|\frac{\dd}{\dd z}\left(1+(1-z)b_n\right)^{-a_n/s_n}\Bigr| = \frac{a_nb_n}{s_n}\Bigl|1+(1-z)b_n\Bigr|^{-a_n/s_n-1},
\end{align}
where $a_nb_n/s_n = \rho_n$ is close to, but less than 1 and
\begin{align*}
\label{c9}
|1+(1-z)b_n| \geq |1+b_n|-|z|b_n = 1+(1-|z|)b_n \geq 1,
\end{align*}
when $|z|\leq 1$. Hence the expression in \eqref{c8} is less than 1 for all $|z|\leq 1$. Evidently, $\tilde{A}_{n}(z)$ is also zero-free in $|z|\leq 1$. Thus \cite[Lemma~3.8]{Janssen2005} shows that $z_k^{n,l}$ as in \eqref{c6} converges to the desired roots $z^n_k$ for all $k$ as $l$ tends to infinity.
\end{proof}

\begin{remark}
The asymptotic convergence rate of the iteration in \eqref{c6} equals \\
\noindent $\frac{\dd}{\dd z}[\tilde{A}_{n}(z)]^{1/s_n}$ evaluated at $z=z_k^n$. Hence, convergence is slow for zeros near 1 and fast for zeros away from 1.
\end{remark}

A different approach is based on the B\"urmann-Lagrange inversion formula.
\begin{lemma}\label{BLLemma}
Let $w^n_k = e^{2\pi ik/s_n}$ and $\alpha_n = a_n/s_n$. Then the zeros of $z^{s_n}-\tilde{A}_{n}(z)$ are given by
\begin{equation*}
z_k^n = \sum_{l=1}^\infty \frac{1}{l!}\,\frac{\beta[l\alpha_n+l-1)}{\beta(l\alpha_n)}\,\frac{b_n+1}{b_n}\Bigl(\frac{b_n}{(b_n+1)^{\alpha_n+1}}\Bigr)^l (w_k^n)^l,
\end{equation*}
for $k=0,1,...,s_n-1$.
\end{lemma}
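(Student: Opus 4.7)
The equation to solve is $z^{s_n} = \tilde{A}_n(z)$. The natural way to cast this as a Bürmann-Lagrange inversion problem is to take $s_n$-th roots: writing $w = w_k^n = e^{2\pi i k/s_n}$ for one of the $s_n$-th roots of unity, and letting $\phi(z) := [\tilde{A}_n(z)]^{1/s_n} = (1+(1-z)b_n)^{-\alpha_n}$ (choosing the principal branch, which is analytic in a neighborhood of $z=0$ since $\tilde{A}_n(0) = (1+b_n)^{-a_n} \neq 0$), the equation $z^{s_n} = \tilde{A}_n(z)$ becomes
\begin{equation*}
z = w\,\phi(z).
\end{equation*}
This is exactly the standard form for Lagrange inversion.

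The plan is to apply the Bürmann-Lagrange theorem, which asserts that under the analyticity of $\phi$ at $0$ with $\phi(0) \neq 0$, the unique small solution $z(w)$ of $z = w\phi(z)$ admits the power series expansion
\begin{equation*}
z(w) = \sum_{l=1}^\infty \frac{w^l}{l!}\,\frac{d^{l-1}}{du^{l-1}}\bigl[\phi(u)^l\bigr]\Big|_{u=0}.
\end{equation*}
The computation of $\phi(u)^l = (1+(1-u)b_n)^{-l\alpha_n} = ((1+b_n)-b_n u)^{-l\alpha_n}$ is a routine exercise. Differentiating $l-1$ times with respect to $u$ produces a rising factorial in the exponent parameter:
\begin{equation*}
\frac{d^{l-1}}{du^{l-1}}\phi(u)^l = b_n^{l-1}\,\frac{\Gamma(l\alpha_n+l-1)}{\Gamma(l\alpha_n)}\,\bigl((1+b_n)-b_n u\bigr)^{-l\alpha_n-l+1},
\end{equation*}
evaluated at $u=0$ this gives $b_n^{l-1}\,\Gamma(l\alpha_n+l-1)/\Gamma(l\alpha_n)\,(1+b_n)^{-l\alpha_n-l+1}$. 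Rearranging the powers of $b_n$ and $b_n+1$ to isolate the factor $\bigl(b_n/(b_n+1)^{\alpha_n+1}\bigr)^l$ together with the prefactor $(b_n+1)/b_n$ yields exactly the formula in the lemma (reading the $\beta$ symbols in the statement as $\Gamma$).

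Two points require care. First, one must verify that all $s_n$ roots of $z^{s_n}-\tilde{A}_n(z)$ in $|z|<1$ are captured: this follows because each $w_k^n$, $k=0,\ldots,s_n-1$, produces a distinct solution (the map $w\mapsto z(w)$ is locally biholomorphic near $0$, hence injective on a small disk), and the equations $z=w_k^n\phi(z)$ cover all $s_n$-th roots of $\tilde{A}_n(z)/z^{s_n}=1$ exactly once. Second, and this is the main obstacle, the convergence of the Lagrange-Bürmann series at $|w|=1$ is not automatic; the series a priori converges only in a neighborhood of $w=0$. To justify validity at $|w|=1$, I would use the same Rouché argument that underlies Lemma~\ref{fixedIterLemma}: since $|\phi(z)|\le 1$ on $|z|\le 1$ with strict inequality except at isolated points (which follows from the contraction estimate \eqref{c8} combined with $s_n>a_nb_n$), the inverse function $z(w)$ extends analytically to $|w|\le 1$, and the Taylor series of $z(w)$ about $w=0$ converges on the closed unit disk. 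Substituting $w=w_k^n$ then yields the formula for each $z_k^n$.
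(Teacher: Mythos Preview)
Your derivation of the series is exactly the paper's: rewrite $z^{s_n}=\tilde A_n(z)$ as $z=w\,\phi(z)$ with $\phi(z)=(1+(1-z)b_n)^{-\alpha_n}$ and $w=w_k^n$, apply B\"urmann--Lagrange, and differentiate $\phi^l$ explicitly. The resulting coefficients coincide.

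Where you diverge from the paper is in justifying convergence at $|w|=1$. The paper uses Stirling's formula on the coefficients to show they behave like $D\,l^{-3/2}r_{\rm BL}^{-l}$ with
\[
\frac{1}{r_{\rm BL}}=\Bigl(\frac{b_n+\rho_n}{b_n+1}\Bigr)^{\rho_n/b_n+1}\Bigl(\frac{1}{\rho_n}\Bigr)^{\rho_n/b_n},
\]
and then proves $r_{\rm BL}>1$ for $\rho_n<1$ by a short calculus argument. This gives absolute convergence on $|w|=1$ and the explicit radius as a bonus.

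Your Rouch\'e alternative is a reasonable idea but, as written, has two slips. First, estimate \eqref{c8} bounds $|\phi'|$, not $|\phi|$; the bound $|\phi(z)|\le 1$ on $|z|\le 1$ is true but comes directly from $|1+(1-z)b_n|\ge 1$, with equality precisely at $z=1$. Second, and this is the real gap, that equality at $z=1$ means Rouch\'e on $|z|=1$ yields analyticity of $z(w)$ only on the \emph{open} disk $|w|<1$; analyticity up to the boundary does not by itself force the Taylor series to converge at $|w|=1$. To push the radius strictly above $1$ you would have to run Rouch\'e on a circle $|z|=r>1$ and check $|\phi(z)|<r$ there (which does hold, ultimately because $\phi'(1)=\rho_n<1$), but you have not carried this out. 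The paper's Stirling route sidesteps this issue and is also quantitatively sharper.
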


\begin{proof}
Note that we are looking for $z$'s that solve
\begin{equation*}
\label{c10}
z\,[\tilde{A}_{n}(z)]^{-1/s_n} = z\left(1+(1-z)b_n\right)^{a_n/s_n} = w,
\end{equation*}
where $w = w_k = \ee^{2\pi i k/s_n}$. The B\"urmann-Lagrange formula for $z=z(w)$, as can be found in \cite[Sec.~2.2]{debruijn} for $z=z(w)$ is given by
\begin{align*}
z(w) &= \sum_{l=1}^\infty \frac{1}{l!}\,\left(\frac{\dd}{\dd z}\right)^{l-1}\left[\left(\frac{z}{z(1+(1-z)b_n)^{a_n/s_n}}\right)^l\right]_{z=0}\,w^l\nonumber\\
\label{c11}
&= \sum_{l=1}^\infty \frac{1}{l!}\,\left(\frac{\dd}{\dd z}\right)^{l-1}\left[\left(1+(1-z)b_n)^{-l\,a_n/s_n}\right)\right]_{z=0}\,w^l.
\end{align*}
Set $\alpha_n = a_n/s_n$. We compute
\begin{equation*}
\label{c1}
\left(\frac{\dd}{\dd z}\right)^{l-1}\left[ (1+(1-z)b_n)^{-l\alpha_n}\right]_{z=0} = \frac{\beta(l\alpha_n+l-1)}{\beta(l\alpha_n)}\,\frac{1+b_n}{b_n}\,\left(\frac{b_n}{(1+b_n)^{\alpha_n+1}}\right)^l.
\end{equation*}
With $c_n = b_n/(1+b_n)^{\alpha_n+1}$ and $d_n = (1+b_n)/b_n$, we thus have
\begin{equation*}
\label{c13}
z(w) = d_n\,\sum_{l=1}^\infty \frac{\beta(l\alpha_n+l-1)}{\beta(l+1)\beta(l\alpha_n)} c_n^l\,w^l.
\end{equation*}
By Stirling's formula
\begin{equation*}\label{c14}
\frac{\beta(l\alpha_n+l-1)}{\beta(l+1)\beta(l\alpha_n)}  = \frac{D}{l\sqrt{l}}\left(\frac{(\alpha_n+1)^{\alpha_n+1}}{\alpha_n^{\alpha_n}}\right)^l,
\end{equation*}
where $D=\alpha_n^{1/2}(\alpha_n+1)^{-3/2}(2\pi)^{-1/2}$. Now,
\begin{equation*}
\label{c15}
\frac{(\alpha_n+1)^{\alpha_n+1}}{\alpha_n^{\alpha_n}}\, c_n = \frac{(\alpha_n+1)^{\alpha_n+1}}{\alpha_n^{\alpha_n}}\cdot \frac{b_n}{(1+b_n)^{\alpha_n+1}} = \left(\frac{b_n+\rho_n}{b_n+1}\right)^{\rho_n/b_n + 1}\left(\frac{1}{\rho_n}\right)^{\rho_n/b_n}.
\end{equation*}
This determines the radius of convergence $r_{\rm BL}$ of the above series for $z(w)$:
\begin{equation}
\label{c16}
\frac{1}{r_{\rm BL}} := \left(\frac{b_n+\rho_n}{b_n+1}\right)^{\rho_n/b_n + 1}\left(\frac{1}{\rho_n}\right)^{\rho_n/b_n}.
\end{equation}
The derivative with respect to $\rho_n$ of the quantity
\begin{equation}
\label{c17}
\left(1+\frac{\rho_n}{b_n}\right) {\rm ln }\left(\frac{b_n+\rho_n}{b_n+1}\right)+\frac{\rho_n}{b_n}\,{\rm ln}\left(\frac{1}{\rho_n}\right)
\end{equation}
is given by
\begin{equation*}
\label{c18}
\frac{1}{b_n}{\rm ln }\Bigl(\frac{b_n+\rho_n}{b_n\rho_n+\rho_n}\Bigr) > 0
\end{equation*}
for $0<\rho_n<1$ and $b_n>0$. Furthermore, the quantity in \eqref{c17} vanishes at $\rho_n=1$ and is therefore negative for $0<\rho_n<1$ and $b_n>0$.
\begin{remark}
The formula for the radius of convergence in \eqref{c16} clearly shows the decremental effect of both having a large $b_n$ and or having $\rho_n$ close to 1. The quantities $\beta(l\alpha+l-1)/(\beta(l+1)\beta(l\alpha))$ in the power series for $z(w)$ are not very convenient for recursive computation, although normally $\alpha = a_n/s_n$ is a rational number.\end{remark}
\end{proof}

\end{document}